\newcommand{\RR}{\ensuremath{\mathbb{R}}}
\newcommand{\NN}{\ensuremath{\mathbb{N}}}
\newcommand{\EE}{\ensuremath{\mathbb{E}}}
\newcommand{\eps}{\ensuremath{\varepsilon}}
\newcommand{\borel}{\ensuremath{\mathcal{B}}}
\newcommand{\wk}{\ensuremath{\stackrel{\mathcal{D}}{\rightarrow}}}
\newcommand{\scale}{b}
\newcommand{\loc}{a}
\newcommand{\scalepm}{\scale_-}
\newcommand{\locpm}{\loc_-}
\newcommand{\scalemp}{\scale_+}
\newcommand{\locmp}{\loc_+}
\newcommand{\scaleOEtime}{\widehat{\scale}_t}
\newcommand{\locOEtime}{\widehat{\loc}_t}
\newcommand{\tpo}{\ensuremath{{t}}}
\newcommand{\fz}{\textbf{F}$_\mathbf{0}$}
\theoremstyle{plain}
\newtheorem{theorem}{Theorem}
\newtheorem{lemma}[theorem]{Lemma}
\theoremstyle{definition}
\newtheorem{example}{Example}
\theoremstyle{remark}
\newtheorem{remark}{Remark}
\begin{document}

\title{Extreme Events of Markov Chains}

\author{I.~Papastathopoulos\footnote{Postal address: University of Edinburgh, School of Mathematics, {Edinburgh EH9 3FD, UK}, Email address: i.papastathopoulos@ed.ac.uk},\, 
K.~Strokorb\footnote{Postal address: University of Mannheim,  Institute of Mathematics, 
 68131 Mannheim, Germany, Email address: strokorb@math.uni-mannheim.de},\, 
J.A.~Tawn\footnote{Postal address: Lancaster University, Department of Mathematics and Statistics, {Lancaster LA1 4YF}, UK, Email address: j.tawn@lancaster.ac.uk} \,
and A.~Butler\footnote{Postal address: Biomathematics and Statistics Scotland, \mbox{Edinburgh EH9 3FD}, UK, Email address: adam.butler@bioss.ac.uk}}

\enlargethispage{10mm}

\maketitle
\thispagestyle{empty}

\begin{abstract}
  The extremal behaviour of a Markov chain is typically characterized
  by its tail chain.\ For asymptotically dependent Markov chains
  existing formulations fail to capture the full evolution of the
  extreme event when the chain moves out of the extreme tail region
  and for asymptotically independent chains recent results fail to
  cover well-known asymptotically independent processes such as Markov
  processes with a Gaussian copula between consecutive values.  We use
  more sophisticated limiting mechanisms that cover a broader class of
  asymptotically independent processes than current methods, including
  an extension of the canonical Heffernan-Tawn normalization scheme,
  and reveal features which existing methods reduce to a degenerate
  form associated with non-extreme states.
\end{abstract}

{\small
  \noindent \textit{Keywords}: { Asymptotic independence, conditional
    extremes, extreme value theory, Markov chains, hidden tail chain,
    tail chain
  }\\
  \noindent \textit{2010 MSC}: {Primary 60G70; 60J05} \\
  \phantom{\textit{2010 MSC}:} {Secondary 60G10} }


\section{Introduction}

Markov chains are natural models for a wide range of applications,
such as financial and environmental time series. For example, GARCH
models are used to model volatility and market crashes
\citep{mikoschstarica00,mikosch03,davismikosch09} and low order Markov
models are used to determine the distributional properties of cold
spells and heatwaves \citep{smithetal97,reichetal13,winttawn15} and
river levels \citep{easttawn12}.  It is the extreme events of the
Markov chain that are of most practical concern, e.g., for risk
assessment.  \cite{root88} showed that the extreme events of
stationary Markov chains that exceed a high threshold converge to a
Poisson process and that limiting characteristics of the values within
an extreme event can be derived, under certain circumstances, as the
threshold converges to the upper endpoint of the marginal
distribution. It is critical to understand better the behaviour of a
Markov chain within an extreme event under less restrictive conditions
through using more sophisticated limiting mechanisms. This is the
focus of this paper.

As pointed out by \cite{coleheff99} and \cite{ledtawn03}, when
analysing the extremal behaviour of a stationary process $\{X_t \,:\,
t=0,1,2,\dots \}$ with marginal distribution $F$, one has to
distinguish between two classes of extremal dependence that can be
characterized through the quantity
\begin{align}\label{eq:admeasure}
  \chi_t=\lim_{u \to 1}\Pr(F(X_t)>u \,|\, F(X_0)>u).
\end{align}
When $\chi_t>0$ for some $t>1$ ($\chi_t=0$ for all $t>1$) the process
is said to be \emph{asymptotically dependent} (\emph{asymptotically
  independent}) respectively. For a first order Markov chain, if
$\chi_1>0$, then $\chi_t>0$ for all $t>1$ \citep{Smith92}. For a broad
range of first order Markov chains we have considered, it follows that
when $\max(\chi_1,\chi_2)=0$, the process is asymptotically
independent at all lags. Here, the conditions on $\chi_1$ and $\chi_2$
limit extremal positive and negative dependence respectively.\ The
  most established measure of extremal dependence in stationary
  processes is the extremal index \citep{obri87}, denoted by $\theta$,
  which is important as $\theta^{-1}$ is the mean duration of the
  extreme event \citep{lead83}. In general $\chi_t$, for
  $t=1,2,\ldots $ does not determine $\theta$, however for first order
  Markov chains $\theta=1$ if $\max(\chi_1,\chi_2)=0$. In contrast
  when $\max(\chi_1,\chi_2)>0$ then we only know that $0<\theta<1$,
  with the value of $\theta$ determined by other features of the joint
  extreme behaviour of
  $(X_1,X_2,X_3)$.

To derive greater detail about within extreme events for Markov chains
we need to explore the properties of the \emph{tail chain} where a
tail chain describes the nature of the Markov chain after an extreme
observation, expressed in the limit as the observation tends to the
upper endpoint of the marginal distribution of $X_t$.  The study of
extremes of asymptotically dependent Markov chains by tail chains was
initiated by \cite{Smith92} and \cite{perf94} for deriving the value
of $\theta$ when $0<\theta<1$.  Extensions for asymptotically
dependent processes to higher dimensions can be found in \cite{perf97}
and \cite{janssege14} and to higher order Markov chains in
\cite{yun98} and multivariate Markov chains in
\citet{BasrakSegers09}. \cite{smithetal97}, \cite{sege07} and
\cite{janssege14} also study tail chains that go backwards in time and
\cite{perf94} and \cite{resnzebe13a} include regularity conditions
that prevent jumps from a non-extreme state back to an extreme state,
and characterisations of the tail chain when the process can suddenly
move to a non-extreme state.  Almost all the above mentioned tail
chains have been derived under regular variation assumptions on the
marginal distribution, rescaling the Markov chain by the extreme
observation resulting in the tail chain being a multiplicative random
walk.  Examples of statistical inference exploiting these results for
asymptotically dependent Markov chains are \cite{smithetal97} and
\citet{DreesSegersWarchol15}.

Tail chains of Markov chains whose dependence structure may exhibit
asymptotic independence were first addressed by Butler in the
discussion of \cite{hefftawn04} and \cite{Butler05}. More recently,
\cite{kulisoul} treat asymptotically independent Markov chains for
regularly varying marginal distributions of whose limiting tail chains
behaviour can be studied by a scale normalization using a regularly
varying function of the extreme observation and under assumptions that
prevent both jumps from a extreme state to a non-extreme state and
vice versa.

The aim of this article is to further weaken these limitations with an
emphasis on the asymptotic independent case. For example, the existing
literature fails to cover important cases such as Markov chains whose
transition kernel normalizes under the canonical family from
\cite{hefftawn04} nor applies to Gaussian copulas. Our new results
cover existing results and these important families as well as
inverted max-stable copulas \citep{ledtawn97}.  Furthermore, we are
able to derive additional structure for the tail chain, termed the
\emph{hidden tail chain}, when classical results give that the tail
chain suddenly leaves extreme states and also when the tail chain is
able to return to extremes states from non-extreme states. One key
difference in our approach is that, while previous accounts focus on
regularly varying marginal distributions, we assume our marginal
distributions to be in the Gumbel domain of attraction, like
\cite{Smith92}, as with affine norming this marginal choice helps to
reveal structure not apparent through affine norming of regularly
varying marginals.

To make this specific consider the distributions of $X^R_{t+1}|X^R_t$
and $X^G_{t+1}|X^G_t$, where $X_t^R$ has regularly varying tail and
$X_t^G$ is in the domain of attraction of the Gumbel distribution,
respectively, and hence crudely $X_t^G=\log(X_t^R)$. \cite{kulisoul}
consider non-degenerate distributions of
\begin{align}
  \lim_{x\rightarrow \infty} \Pr \bigg( \frac{X^R_{t+1}}{a^R(x)}
  <z \,\bigg\vert\, X^R_t>x \bigg)
\label{eq:kulsoul_cond}
\end{align}
with $a^R>0$ a regularly varying function. In contrast we consider the
non-degenerate limiting distributions of
\begin{align}
  \lim_{x\rightarrow \infty}
  \Pr\bigg(\frac{X^G_{t+1}-a(X^G_t)}{b(X^G_t)} <z \,\bigg\vert\,
  X^G_t > x \bigg)
\label{eq:ht_cond}
\end{align}
with affine norming functions $a$ and $b>0$. There are two differences between these limits: the use of random norming, using the previous value $X_t^G$  instead of a deterministic norming that uses the threshold $x$, and the use of affine norming functions $a$ and $b>0$ after a log-transformation instead of simply a scale norming $a^R$.  Under the framework of extended regular variation \cite{resnzebe14} give mild conditions which leads to limit~\eqref{eq:kulsoul_cond} existing with identical norming functions when  either random or deterministic norming is used. Under such conditions, when
limit~\eqref{eq:kulsoul_cond} is non-degenerate then
limit~\eqref{eq:ht_cond} is also non-degenerate with $a(\cdot)=\log a^R(\exp(\cdot))$ and $b(\cdot)=1$, whereas the converse does not hold when $b(x)\nsim 1$ as $x\rightarrow \infty$. In this paper we will illustrate a number of examples of practical importance where $b(x)\nsim 1$ as $x\rightarrow \infty$ for which the approach of \cite{kulisoul} fails but limit~\eqref{eq:ht_cond} reveals interesting structure.

\textbf{Organization of the paper.}  In Section~\ref{sec:tailchain},
we state our main theoretical results deriving tail chains with affine
update functions under rather broad assumptions on the extremal
behaviour of both asymptotically dependent and asymptotically
independent Markov chains. As in previous accounts
(\cite{perf94,resnzebe13a,janssege14} and \cite{kulisoul}), our
results only need the homogeneity (and not the stationarity) of the
Markov chain and therefore, we state our results in terms of
homogeneous Markov chains with initial distribution $F_0$ (instead of
stationary Markov chains with marginal distribution $F$).  We apply
our results to stationary Markov chains with marginal distribution
$F=F_0$ in Section~\ref{sec:examples} to illustrate tail chains for a
range of examples that satisfy the conditions of
Section~\ref{sec:tailchain} but are not covered by existing results.
In Section~\ref{sec:extensions} we derive the hidden tail chain for a
range of examples that fail to satisfy the conditions of
Section~\ref{sec:tailchain}. Collectively these reveal the likely
structure of Markov chains that depart from the conditions of
Section~\ref{sec:tailchain}.  All proofs are postponed to
Section~\ref{sec:proofs}.

\textbf{Some notation.} Throughout this text, we use the following
standard notation. For a topological space $E$ we denote its
\emph{Borel-$\sigma$-algebra} by $\borel(E)$ and the \emph{set of
  bounded continuous functions} on $E$ by $C_b(E)$. If $f_n,f$ are
real-valued functions on $E$, we say that $f_n$ (resp.\ $f_n(x)$)
\emph{converges uniformly on compact sets} (in the variable $x \in E$)
to $f$ if for any compact $C \subset E$ the convergence $\lim_{n \to
  \infty} \sup_{x \in C} \lvert f_n(x)-f(x) \rvert = 0$ holds
true. Moreover, $f_n$ (resp.\ $f_n(x)$) will be said to \emph{converge
  uniformly on compact sets to $\infty$} (in the variable $x \in E$)
if $\inf_{x \in C} f_n(x) \to \infty$ for compact sets $C \subset E$.
\emph{Weak convergence of measures} on $E$ will be abbreviated by
$\wk$. When $K$ is a distribution on $\RR$, we simply write $K(x)$
instead of $K((-\infty,x])$. If $F$ is a distribution function, we
abbreviate its survival function by $\overline{F}=1-F$ and its
generalized inverse by $F^\leftarrow$. The relation $\sim$ stands for
``is distributed like'' and the relation $\doteq$ means ``is
asymptotically equivalent to''.

\section{Statement of theoretical results}\label{sec:tailchain}

Let $\{X_t \,:\, t=0,1,2,\dots \}$ be a homogeneous real-valued Markov
chain with initial distribution $F_0(x)=\Pr(X_0 \leq x)$, $x \in \RR$
and transition kernel
\begin{align*}
  \pi(x,A)=\Pr(X_{t+1} \in A \mid X_t=x), \qquad x \in \RR,\, A \in
  \borel(\RR), \qquad t=0,1,2,\dots.
\end{align*}
There are many situations, where there exist suitable location and
scale norming functions $a(v)\in\RR$ and $b(v)>0$, such that the
normalized kernel $\pi(v, a(v)+b(v)dx)$ converges weakly to some
non-degenerate probability distribution as $v$ becomes large, cf.\
\cite{hefftawn04,resnzebe14} and Sections~\ref{sec:examples} and
\ref{sec:extensions} for several important examples. Note that the
normalized transition kernel $\pi(v, a(v)+b(v)dx)$ corresponds to the
random variable $(X_{t+1}-a(v))/b(v)$ conditioned on $X_t=v$. To
simplify the notation, we sometimes write
\begin{align*}
  \pi(x,y)=\Pr(X_{t+1} \leq y \mid X_t=x), \qquad x,y \in \RR, \qquad
  t=0,1,2,\dots.
\end{align*}
Our goal in this section is to formulate general (and practically
checkable) conditions that extend the convergence above (which
concerns only one step of the Markov chain) to the convergence of the
finite-dimensional distributions of the whole normalized Markov chain
\begin{align*}
  \left\{ \frac{X_t-\loc_t(X_0)}{\scale_t(X_0)} \,:\, t=1,2,\dots
  \right\} \,\bigg\vert\, X_0 > u
\end{align*}
to a tail chain $\{M_t \,:\, t=1,2,\dots \}$ as the threshold $u$
tends to its upper endpoint. Using the actual value $X_0$ as the
argument in the normalizing functions (instead of the threshold $u$),
is usually referred to as \emph{random norming} \citep{heffres07} and
is motivated by the belief that the actual value $X_0$ contains more
information than the exceeded threshold $u$.  It is furthermore
convenient that not only the normalization of the original chain
$\{X_t \,:\, t=1,2,\dots \}$ can be handled via \emph{location-scale}
normings, but if also the update functions of the tail chain $\{M_t
\,:\, t=1,2,\dots \}$ are \emph{location-scale} update functions. That
is, they are of the form $M_{t+1} =
\psi^\loc_\tpo(M_t)+\psi^\scale_\tpo(M_t) \,\eps_\tpo$ for an i.i.d.\
sequence of innovations $\{ \eps_\tpo \,:\, t=1,2,\dots \}$ and update
functions $\psi^\loc_\tpo(x)\in\RR$ and $\psi^\scale_\tpo(x)>0$.

The following assumptions on the extremal behaviour of the original
Markov chain $\{X_t \,:\, t=0,1,2,\dots \}$ make the above ideas
rigorous and indeed lead to location-scale tail chains in
Theorems~\ref{thm:tailchain} and~\ref{thm:tailchain:nonneg}. Our first
assumption concerns the extremal behaviour of the initial distribution
and is the same throughout this text.

\begin{description}[wide=0\parindent]
\item[Assumption \fz]
  \emph{(extremal behaviour of the initial distribution)}\\
  $F_0$ has upper endpoint $\infty$ and there exist a probability
  distribution $H_0$ on $[0,\infty)$ and a measurable norming function
  $\sigma(u)>0$, such that
  \begin{align*}
    \frac{F_{0}(u+\sigma(u) dx)}{\overline{F}_0(u)} \wk H_0(dx) \qquad
    \text{as } u \uparrow \infty.
  \end{align*}
\end{description}

We will usually think of ${H_0}(x)=1-\exp(-x)$, $x \geq 0$ being the
standard exponential distribution, such that $F_0$ lies in the Gumbel
domain of attraction. Next, we assume that the transition kernel
converges weakly to a non-degenerate limiting distribution under
appropriate location and scale normings. We distinguish between two
subcases.


\paragraph{First case (A) -- Real-valued chains with location
  and scale norming}
\begin{description}[wide=0\parindent]
\item[Assumption A1]
  \emph{(behaviour of the next state as the previous state becomes extreme)}\\
  There exist measurable norming functions $\loc(v)\in \RR$,
  $\scale(v)>0$ and a non-degenerate distribution function $K$ on
  $\RR$, such that
  \begin{align*}
    \pi(v, \loc(v) + \scale(v) dx) \wk K(dx) \qquad \text{as } v
    \uparrow \infty.
  \end{align*}
\end{description}

  \begin{remark}
    By saying that the distribution $K$ is supported on $\RR$, we do
    not allow $K$ to have mass at $-\infty$ or $+\infty$.  The weak
    convergence is meant to be on $\RR$.  In
    Section~\ref{sec:extensions} we will address situations in which
    this condition is relaxed.
  \end{remark}

  \begin{description}[wide=0\parindent]
  \item[Assumption A2] \emph{(norming functions and update functions
      for the tail chain)}
    \begin{enumerate}[label={(\alph*)}]
    \item Additionally to $\loc_1=\loc$ and $\scale_1=\scale$ there
      exist measurable norming functions $\loc_t(v) \in \RR$,
      $\scale_t(v)>0$ for each time step $t=2,3,\dots$, such that
      $\loc_t(v)+\scale_t(v)x \rightarrow \infty$ as $v \uparrow
      \infty$ for all $x \in \RR$, $t=1,2,\dots$.
    \item Secondly, there exist {continuous} update functions
      \begin{align*}
        \psi_\tpo^\loc(x) &= \lim_{v\rightarrow \infty}\frac{
          \loc\left(\loc_t(v)+\scale_t(v)x\right) -\loc_{t+1}(v)}{\scale_{t+1}(v)} \in \RR, \\
        \psi_\tpo^\scale(x) &= \lim_{v\rightarrow
          \infty}\frac{\scale\left(\loc_t(v)+\scale_t(v)x\right)}{\scale_{t+1}(v)}>0,
      \end{align*}
      defined for $x \in \RR$ and $t=1,2,\dots$, such that the
      remainder terms
      \begin{align*}
        r^\loc_\tpo(v,x)&=\frac{\loc_{t+1}(v) -
          \loc(\loc_t(v)+\scale_t(v)x) +\scale_{t+1}(v)
          \psi^\loc_\tpo(x)}{\scale(\loc_t(v)+\scale_t(v)x)},\\
        r^\scale_\tpo(v,x)&=1-\frac{\scale_{t+1}(v)
          \psi^\scale_\tpo(x)}{\scale(\loc_t(v)+\scale_t(v)x)}
      \end{align*}
      converge to $0$ as $v\uparrow \infty$ and both convergences hold
      uniformly on compact sets in the variable $x \in \RR$.
    \end{enumerate}
  \end{description}

\begin{remark}
  The update functions $\psi^\loc_\tpo$, $\psi^\scale_\tpo$ are
  necessarily given as in assumption \textbf{A2} if the remainder
  terms $r^\loc_\tpo$, $r^\scale_\tpo$ therein converge to $0$.
\end{remark}

\begin{theorem}
  \label{thm:tailchain}
  Let $\{X_t \,:\, t = 0,1,2, \dots \}$ be a homogeneous Markov chain
  satisfying assumptions \fz, \textbf{A1} and \textbf{A2}. Then, as $u
  \uparrow \infty$,
  \begin{align*}
    \left(\frac{X_0 -
        u}{\sigma(u)},\frac{X_1-\loc_1(X_0)}{\scale_1(X_0)},\frac{X_2-\loc_2(X_0)}{\scale_2(X_0)},\dots,\frac{X_t-\loc_t(X_0)}{\scale_t(X_0)}\right)
    \,\bigg\vert\, X_0 > u
  \end{align*}
  converges weakly to $\left(E_0,M_1,M_2,\dots,M_t\right)$, where
  \begin{enumerate}[label={(\roman*)}]
  \item $E_0 \sim H_0$ and $(M_1,M_2,\dots,M_t)$ are independent,
  \item $M_1 \sim K$ and $M_{t+1} =
    \psi^\loc_\tpo(M_t)+\psi^\scale_\tpo(M_t) \, \eps_\tpo, \,
    t=1,2,\dots$ for an i.i.d.\
    sequence 
    of innovations $\eps_\tpo \sim K$.
  \end{enumerate}
\end{theorem}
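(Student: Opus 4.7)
The plan is to proceed by induction on $t$. The base case $t=1$ follows directly from \fz{} and \textbf{A1}: for $f,g \in C_b(\RR)$, the Markov property gives
\begin{align*}
\EE\!\left[ f\!\left(\tfrac{X_0-u}{\sigma(u)}\right) g\!\left(\tfrac{X_1-\loc(X_0)}{\scale(X_0)}\right) \,\bigg\vert\, X_0 > u \right] = \EE\!\left[ f\!\left(\tfrac{X_0-u}{\sigma(u)}\right) G(X_0) \,\bigg\vert\, X_0 > u \right],
\end{align*}
with $G(x):=\int g(z)\,\pi(x,\loc(x)+\scale(x)dz)$. Assumption \textbf{A1} gives $G(x)\to\int g\,dK$ as $x\to\infty$, which on the conditioning event $\{X_0>u\}$ with $u\uparrow\infty$ forces $G(X_0)$ to decouple from the $f$-factor; then \fz{} handles the latter, delivering $(E_0,M_1)\sim H_0\otimes K$ jointly.

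For the inductive step, assume convergence holds through time $t$ and write $v:=X_0$, $\tilde M_s:=(X_s-\loc_s(v))/\scale_s(v)$, $\tilde\eps_{t+1}:=(X_{t+1}-\loc(X_t))/\scale(X_t)$. Substituting $X_t=\loc_t(v)+\scale_t(v)\tilde M_t$ into $(X_{t+1}-\loc_{t+1}(v))/\scale_{t+1}(v)$ and using the definitions of $r^\loc_\tpo$ and $r^\scale_\tpo$ from \textbf{A2}(b), one obtains the algebraic identity
\begin{align*}
\tilde M_{t+1} = \psi^\loc_\tpo(\tilde M_t) - \frac{\psi^\scale_\tpo(\tilde M_t)\,r^\loc_\tpo(v,\tilde M_t)}{1-r^\scale_\tpo(v,\tilde M_t)} + \frac{\psi^\scale_\tpo(\tilde M_t)}{1-r^\scale_\tpo(v,\tilde M_t)}\,\tilde\eps_{t+1} =: \phi_t(v,\tilde M_t,\tilde\eps_{t+1}).
\end{align*}
For a test function $h\in C_b(\RR^{t+2})$, conditioning on $X_0,\dots,X_t$ and invoking the Markov property then yields
\begin{align*}
\EE[h(\tilde E_0,\tilde M_1,\dots,\tilde M_{t+1}) \mid X_0>u] = \EE[H_v(\tilde E_0,\tilde M_1,\dots,\tilde M_t) \mid X_0>u],
\end{align*}
where $H_v(e_0,m_1,\dots,m_t):=\int h(e_0,m_1,\dots,m_t,\phi_t(v,m_t,z))\,K^v_{m_t}(dz)$ and $K^v_m$ denotes $\pi(\loc_t(v)+\scale_t(v)m,\loc(\cdot)+\scale(\cdot)d\cdot)$, the conditional law of $\tilde\eps_{t+1}$ given $X_t=\loc_t(v)+\scale_t(v)m$.

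The crux is to show that $H_v\to H_\infty$ uniformly on compact sets as $v\to\infty$, where $H_\infty(e_0,m_1,\dots,m_t):=\int h(e_0,m_1,\dots,m_t,\psi^\loc_\tpo(m_t)+\psi^\scale_\tpo(m_t)z)\,K(dz)$ is bounded and continuous. Two ingredients deliver this: (i) by \textbf{A2}(b), $\phi_t(v,m,z)\to\psi^\loc_\tpo(m)+\psi^\scale_\tpo(m)z$ uniformly on compact sets in $(m,z)$; and (ii) $K^v_m\wk K$ uniformly in $m$ over compacts, which combines \textbf{A1} with \textbf{A2}(a), since $\scale_t>0$ makes the pointwise convergence $\loc_t(v)+\scale_t(v)m\to\infty$ automatically uniform on compacts in $m$ by monotonicity. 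A standard Portmanteau-type argument---equicontinuous approximation of $z\mapsto h(\cdot,\phi_t(v,m_t,z))$ on large compacts plus uniform tightness of the family $\{K^v_{m_t}\}$---then promotes (i)--(ii) to the desired uniform-on-compacts convergence of $H_v$. Since $H_\infty$ is bounded continuous, the inductive hypothesis gives $\EE[H_v(\tilde E_0,\dots,\tilde M_t)\mid X_0>u]\to\EE[H_\infty(E_0,\dots,M_t)]=\EE[h(E_0,\dots,M_{t+1})]$, and the independence of $\eps_{t+1}\sim K$ from $(E_0,M_1,\dots,M_t)$ arises for free because the limit integrand $K(dz)$ in $H_\infty$ does not depend on $(e_0,m_1,\dots,m_t)$. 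The main obstacle is step (ii): weak convergence is intrinsically a pointwise notion, and promoting it to a uniform-in-parameter statement for the measures $K^v_m$---despite the absence of any quantitative modulus on the base transition kernel---requires a delicate interplay between the uniform-on-compacts hypotheses built into \textbf{A2}(a)--(b) and the mere pointwise weak convergence asserted in \textbf{A1}.
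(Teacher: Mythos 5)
Your proposal is correct and follows essentially the same route as the paper: induction on $t$, with the inductive step reduced to showing that the normalized one-step kernel, rewritten via the remainder terms $r^\loc_\tpo, r^\scale_\tpo$ of \textbf{A2}(b), converges uniformly on compact sets to the limit operator $\int g(\psi^\loc_\tpo(x)+\psi^\scale_\tpo(x)y)\,K(dy)$ --- which is exactly the content of the paper's Lemma~\ref{lemma:for:induction:step}, proved there by the same combination of uniform tightness of the kernel family (Lemma~\ref{lemma:tightness}), the monotonicity of $x\mapsto \loc_t(v)+\scale_t(v)x$, and a generalized continuous-mapping lemma (Lemma~\ref{lemma:scmapping}). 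Your identity for $\phi_t(v,\tilde M_t,\tilde\eps_{t+1})$ is the same algebraic rearrangement the paper performs at the start of that lemma's proof, merely placed in the update map rather than in the test function.
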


\begin{remark}\label{rk:tailchain:relaxed}
  Let $S_t=\{ x \in \RR \,:\, \Pr(M_t \leq x) > 0\}$ be the support of
  $M_t$
  and $\overline{S}_t$ its closure in $\RR$.  The conditions in
  assumption \textbf{A2} may be relaxed by replacing all requirements
  for ``$x \in \RR$'' by requirements for ``$x \in
  \overline{S}_t$'' if we assume the kernel convergence in assumption
  \textbf{A1} to hold true on $\overline{S}_1$, cf.\ also
  Remark~\ref{rk:tailchain:relaxed:proof} for modifications in the
  proof.
\end{remark}

\paragraph{Second case (B) -- Non-negative chains with only scale norming}
$\phantom{a}$\\
Considering non-negative Markov chains, where no norming of the
location is needed, requires some extra care, as the convergences in
assumption \textbf{A2} will not be satisfied anymore for all $x \in
[0,\infty)$, but only for $x \in (0,\infty)$. Therefore, we have to
control the mass of the limiting distributions at $0$ in this case.

\begin{description}[wide=0\parindent]
\item[Assumption B1]
  \emph{(behaviour of the next state as the previous state becomes extreme)}\\
  There exists a measurable norming function $\scale(v)>0$ and a
  non-degenerate distribution function $K$ on $[0,\infty)$ with no
  mass at $0$, i.e.\ $K(\{0\})=0$, such that
  \begin{align*}
    \pi(v, \scale(v) dx) \wk K(dx) \qquad \text{as } v \uparrow
    \infty.
  \end{align*}

\item[Assumption B2] \emph{(norming functions and update functions for
    the tail chain)}
  \begin{enumerate}[label={(\alph*)}]
  \item Additionally to $\scale_1=\scale$ there exist measurable
    norming functions $\scale_t(v)>0$ for $t=2,3,\dots$, such that
    $\scale_t(v) \rightarrow \infty$ as $v \uparrow \infty$ for all
    $t=1,2,\dots$.
  \item Secondly, there exist {continuous} update
    functions 
    \begin{align*}
      \psi_\tpo^\scale(x) &= \lim_{v\rightarrow
        \infty}\frac{\scale\left(\scale_t(v)x\right)}{\scale_{t+1}(v)}>0,
    \end{align*}
    defined for $x \in (0,\infty)$ and $t=1,2,\dots$, such that the
    following remainder term
    \begin{align*}
      r^\scale_\tpo(v,x)&=1-\frac{\scale_{t+1}(v)
        \psi^\scale_\tpo(x)}{\scale(\scale_t(v)x)}
    \end{align*}
    converges to $0$ as $v \uparrow \infty$ and the convergence holds
    uniformly on compact sets in the variable $x \in [\delta,\infty)$
    for any $\delta>0$.
  \item Finally, we assume that $\sup\{ x>0 \,:\,
    \psi^{\scale}_\tpo(x) \leq c \} \to 0$ as $c \downarrow 0$ with
    the convention that $\sup(\emptyset)=0$.
  \end{enumerate}

\end{description}

\begin{theorem}
  \label{thm:tailchain:nonneg}
  Let $\{X_t \,:\, t = 0,1,2, \dots \}$ be a non-negative homogeneous
  Markov chain satisfying assumptions \fz, \textbf{B1} and
  \textbf{B2}. Then, as $u \uparrow \infty$,
  \begin{align*}
    \left(\frac{X_0 -
        u}{\sigma(u)},\frac{X_1}{\scale_1(X_0)},\frac{X_2}{\scale_2(X_0)},\dots,\frac{X_t}{\scale_t(X_0)}\right)
    \,\bigg\vert\, X_0 > u
  \end{align*}
  converges weakly to $\left(E_0,M_1,M_2,\dots,M_t\right)$, where
  \begin{enumerate}[label={(\roman*)}]
  \item $E_0 \sim H_0$ and $(M_1,M_2,\dots,M_t)$ are independent,
  \item $M_1 \sim K$ and $M_{t+1} = \psi^\scale_\tpo(M_t) \,
    \eps_\tpo, \, t=1,2,\dots$ for an i.i.d.\
    sequence 
    of innovations $\eps_\tpo \sim K$.
  \end{enumerate}
\end{theorem}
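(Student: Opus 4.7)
The proof proceeds by induction on $t$, closely following the structure of the proof of Theorem~\ref{thm:tailchain} but with the affine normings replaced by purely multiplicative ones and with extra care because the uniform convergences in \textbf{B2} only hold on intervals bounded away from $0$. For the base case $t=1$, I combine \fz\ for $(X_0-u)/\sigma(u)$ with \textbf{B1} applied to the conditional law of $X_1$ given $X_0$. Since conditioning on $X_0 > u$ and letting $u \to \infty$ forces $X_0 \to \infty$ in the conditional law, the kernel convergence $\pi(v,\scale(v)d\cdot)\wk K$ combined with a standard test-function argument produces joint weak convergence of $((X_0-u)/\sigma(u),X_1/\scale(X_0))$ given $X_0>u$ to an independent pair $(E_0,M_1)$ with laws $H_0$ and $K$.

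For the inductive step, assuming joint weak convergence through time $t$, I peel off $X_{t+1}$ by writing
\begin{align*}
\frac{X_{t+1}}{\scale_{t+1}(X_0)} \;=\; \frac{X_{t+1}}{\scale(X_t)} \cdot \frac{\scale(X_t)}{\scale_{t+1}(X_0)},
\end{align*}
and treating each factor in turn. By the Markov property together with \textbf{B1}, and because $X_t\to\infty$ in the conditional law, the first factor converges weakly to an innovation $\eps_\tpo\sim K$ asymptotically independent of the history. Rewriting the second factor as $\scale(\scale_t(X_0)\cdot X_t/\scale_t(X_0))/\scale_{t+1}(X_0)$, the induction hypothesis $X_t/\scale_t(X_0)\to M_t$ combined with \textbf{B2}(b) gives convergence to $\psi^\scale_\tpo(M_t)$. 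A continuous-mapping and Slutsky-type argument then delivers the desired product limit $M_{t+1}=\psi^\scale_\tpo(M_t)\,\eps_\tpo$ and extends the asymptotic independence of the innovations to the whole tail-chain past.

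The main obstacle is that the uniform convergence of $r^\scale_\tpo$ in \textbf{B2}(b) only holds on intervals $[\delta,\infty)$ bounded away from $0$, while the inductive limit $M_t$ may take arbitrarily small positive values. To handle this, I would truncate on the event $\{M_t>\delta\}$ and let $\delta\downarrow 0$. The two key inputs are $K(\{0\})=0$ at the base step, ensuring $M_1>0$ almost surely, and assumption \textbf{B2}(c), which together imply inductively that $M_{t+1}=\psi^\scale_\tpo(M_t)\,\eps_\tpo$ carries no atom at $0$; hence $\Pr(M_t\leq\delta)\to 0$ as $\delta\downarrow 0$ at every time step, so the truncation error vanishes in the limit. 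This truncation is the essential departure from the real-valued case of Theorem~\ref{thm:tailchain} and is where the non-negative structure of the chain and condition \textbf{B2}(c) enter crucially.
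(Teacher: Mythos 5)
Your proposal follows essentially the same route as the paper's proof: an induction that carries, besides the weak convergence itself, a second hypothesis controlling the mass of $M_t$ near $0$, with $K(\{0\})=0$ seeding that control and \textbf{B2}(c) propagating it through the update $M_{t+1}=\psi^\scale_\tpo(M_t)\,\eps_\tpo$, while the one-step factorisation you describe is exactly what the paper packages into its analogue of Lemma~\ref{lemma:for:induction:step}. The only point you leave implicit is that the truncation must also be justified for the pre-limit conditional laws, not just for the limit $M_t$; the paper obtains this by sandwiching the set $\{y_t\le\delta\}$ between continuity sets and invoking the inductive weak convergence, a routine step given what you already have.
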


\begin{remark}
  The techniques used in this setup can be used also for a
  generalisation of Theorem~\ref{thm:tailchain} in the sense that the
  conditions in assumption \textbf{A2} may be even further relaxed by
  replacing all requirements for ``$x \in \RR$'' by the respective
  requirements for ``$x \in S_t$'' (instead of ``$x \in
  \overline{S}_t$'' as in Remark~\ref{rk:tailchain:relaxed}) as long
  as it is possible to keep control over the mass of $M_t$ at the
  boundary of $S_t$ for all $t\geq 1$.  Some of the subtleties arising
  in such situations will be addressed by the examples in
  Section~\ref{sec:extensions}.
\end{remark}

\begin{remark}\label{rk:nonhom}
  The tail chains in Theorems~\ref{thm:tailchain} and
  \ref{thm:tailchain:nonneg} are potentially non-homogeneous since the
  update functions $\psi^a_\tpo$ and $\psi^b_\tpo$ are allowed to vary
  with $t$.
\end{remark}

\section{Examples}\label{sec:examples}

In this section, we collect examples of stationary Markov chains that
fall into the framework of Theorems~\ref{thm:tailchain} and
\ref{thm:tailchain:nonneg} with an emphasis on situations which go
beyond the current theory.  To this end, it is important to note that
the norming and update functions and limiting distributions in
Theorems~\ref{thm:tailchain} and \ref{thm:tailchain:nonneg} may vary
with the choice of the marginal scale. The following example
illustrates this phenomenon and is a consequence of
Theorem~\ref{thm:tailchain}.

\begin{example} \label{ex:marginalscale} \emph{(Gaussian transition kernel with Gaussian vs.\ exponential margins)}\\
  Let $\pi_G$ be the transition kernel arising from a bivariate
  Gaussian distribution with correlation parameter $\rho \in (0,1)$,
  that is
  \begin{align*}
    \pi_G(x,y) = \Phi\left(\frac{y-\rho x}{(1-\rho^2)^{1/2}}\right),
    \qquad \rho \in (0,1),
  \end{align*}
  where $\Phi$ denotes the distribution function of the standard
  normal distribution.  Consider a stationary Markov chain with
  transition kernel $\pi \equiv \pi_G$ and Gaussian marginal
  distribution $F=\Phi$. Then assumption \textbf{A1} is trivially
  satisfied with norming functions $\loc(v) = \rho v$ and
  $\scale(v)=1$ and limiting distribution
  $K_G(x)=\Phi((1-\rho^2)^{-1/2}x)$ on $\RR$.\ The normalization after
  $t$ steps
  $\loc_t(v) = \rho^t v,\,\scale_t(v)=1$
  yields the tail chain
  $M_{t+1} = \rho M_t + \varepsilon_\tpo$
  with $\eps_\tpo \sim K_G$.
  
  However, if this Markov chain is transformed to standard exponential
  margins, which amounts to changing the marginal distribution to
  $F(x)=1-\exp(-x)$, $x \in (0,\infty)$ and $(X_t,X_{t+1})$ having a
  Gaussian copula, then the transition kernel becomes
  \begin{align*}
    \pi(x,y) = \pi_G (
    \Phi^{\leftarrow}\{1-\exp(-x)\},\Phi^{\leftarrow}
    \{1-\exp(-y)\} ),
  \end{align*}
  and assumption \textbf{A1} is satisfied with different norming
  functions $\loc(v) = \rho^2 v$, $\scale(v)=v^{1/2}$ and limiting
  distribution $K(x)=\Phi(x/(2 \rho^2 (1-\rho^2))^{1/2})$ on $\RR$.
  \citep{hefftawn04}. A suitable normalization after $t$ steps is
  $\loc_t(v) = \rho^{2 t} v,\,\scale_t(v)= v^{1/2}$,
  which leads to the scaled autoregressive tail chain
  $M_{t+1} = \rho^2 M_t + \rho^t \varepsilon_\tpo$
  with $\eps_\tpo \sim K$.
\end{example}

To facilitate comparison between the tail chains obtained from
different processes, it is convenient therefore to work on a
prespecified marginal scale. This is in a similar vein to the study of
copulas \citep{Nelsen06,joe15}. Henceforth, we select this scale to be
standard exponential $F(x)=1-\exp(-x)$, $x \in (0,\infty)$, which
makes, in particular, the Heffernan-Tawn model class applicable to the
tail chain analysis of Markov chains as
follows. Theorems~\ref{thm:tailchain} and \ref{thm:tailchain:nonneg}
were motivated by this example.\ {It should be noted that the extremal
  index of any process is invariant to monotone increasing marginal
  transformations. Hence, our transformations enable assessment of the
  impact of different copula structure whilst not changing key
  extremal features.}

\begin{example} \label{ex:ht} \emph{(Heffernan-Tawn normalization)}\\
  \cite{hefftawn04} found that, working on the exponential scale, the
  weak convergence of the normalized kernel $\pi(v,a(v)+b(v)dx)$ to
  some non-degenerate probability distribution $K$
  is satisfied for transition kernels $\pi$ arising from various
  bivariate copula models if the normalization functions belong to the
  canonical family
  \begin{align*}
    \loc(v)=\alpha v,\, \scale(v)= v^\beta, \qquad (\alpha,\beta) \in
    [0,1] \times [0,1) \setminus \{(0,0)\}.
  \end{align*}
  The second Markov chain from Example~\ref{ex:marginalscale} with
  Gaussian transition kernel and exponential margins is an example of
  this type with $\alpha=\rho^2$ and $\beta=1/2$.  The general family
  covers different non-degenerate dependence situations and
  Theorems~\ref{thm:tailchain} and \ref{thm:tailchain:nonneg} allow us
  to derive the norming functions after $t$ steps and the respective
  tail chains as follows.

  \begin{enumerate}[label={(\roman*)}]
  \item If $\alpha=1$ and $\beta=0$, the normalization by $\loc_t(v)=
    v,\, \scale_t(v)=1$, yields the random walk tail chain
    $M_{t+1}=M_t+\varepsilon_\tpo$.
  
  \item If $\alpha \in (0,1)$ and $\beta \in [0,1)$, the normalization
    by $\loc_t(v)=\alpha^t v, \, \scale_t(v)=v^{\beta}$, gives the
    scaled autoregressive tail chain $M_{t+1} = \alpha M_t +
    \alpha^{t\beta} \varepsilon_\tpo$.
    
  \item If $\alpha=0$ and $\beta \in (0,1)$, the normalization by
    $\loc_t(v)=0,\,\scale_t(v)=v^{\beta^t}$, yields the exponential
    autoregressive tail chain $M_{t+1} = (M_t)^\beta
    \varepsilon_\tpo$.

  \end{enumerate}

  In all cases the i.i.d.\ innovations $\eps_\tpo$ stem from the
  respective limiting distribution $K$ of the normalized kernel $\pi$.
  Case (i) deals with Markov chains where the consecutive states are
  asymptotically dependent, cf.\ \eqref{eq:admeasure}. It is covered
  in the literature usually on the Fr{\'e}chet scale, cf.\
  \cite{perf94,resnzebe13a,kulisoul}. The other two cases are
  concerned with asymptotically independent consecutive states of the
  original Markov chain. Results of \cite{kulisoul} cover also the
  subcase of (ii), but only when $\beta=0$.
  In cases (i) and (ii), the location norming is dominant and
  Theorem~\ref{thm:tailchain} is applied, whereas, in case (iii), the
  scale norming takes over and Theorem~\ref{thm:tailchain:nonneg} is
  applied. Unless $\beta=0$, case (ii) yields a non-homogeneous tail
  chain and the remainder term related to the scale
  $r^\scale_\tpo(v,x)=O\left( v^{\beta-1}\right)$ in assumption
  \textbf{A2} does not vanish already for $v<\infty$.  It is worth
  noting that in all cases $\loc_{t+1}=\loc \circ \loc_t$ and in the
  third case (iii), when the location norming vanishes, also
  $\scale_{t+1}=\scale \circ \scale_t$.
\end{example}

Even though all transition kernels arising from the bivariate copulas
as given by \cite{heff00} and \cite{joe15} stabilize under the
Heffernan-Tawn normalization, it is possible that more subtle normings
are necessary.
\cite{paptawn15} found such situations for the bivariate inverted
max-stable distributions. The corresponding transition kernel
$\pi_{inv}$ on the exponential scale is given by
\begin{align*}
  \pi_{inv}(x,y) &= 1 + V_1(1,x/y) \exp\left(x - x V(1,x/y)\right),
\end{align*}
where the exponent measure $V$ admits
\begin{align*}
  V(x,y) = \int_{[0,1]} \max\{w/x,(1-w)/y\} H(dw)
\end{align*}
with $H$ being a Radon measure on $[0,1]$ with total mass 2 satisfying
the moment constraint $\int_{[0,1]}w~H(dw)=1$. The function $V$ is
assumed differentiable and $V_1(s,t)$ denotes the partial derivative
$\partial V(s,t)/\partial s$.  For our purposes, it will even suffice
to assume that the measure $H$ posseses a density $h$ on $[0,1]$. In
particular, it does not place mass at $\{0\}$, i.e., $H(\{0\})=0$.
Such inverted max-stable distributions form a class of models which
help to understand various norming situations.  In the following
examples, we consider stationary Markov chains with transition kernel
$\pi\equiv\pi_{inv}$ and exponential margins.  First, we describe two
situations, in which the Heffernan-Tawn normalization applies.

\begin{example} \emph{(Examples of the Heffernan-Tawn normalization
    based on inverted max-stable distributions)}
  \begin{enumerate}[label={(\roman*)}]
  \item 
    If the density $h$ satisfies $h(w) \doteq \kappa w^s$ as
    $w\downarrow 0$ for some $s>-1$, the Markov chain with transition
    kernel $\pi_{inv}$ can be normalized by the Heffernan-Tawn family
    with $\alpha=0$ and $\beta = (s+1)/(s+2) \in (0,1)$
    \citep{hefftawn04}.
  \item 
    If $\ell \in (0,1/2)$ is the lower endpoint of the measure $H$ and
    its density $h$ satisfies $h(w) \doteq \kappa (w-\ell)^s$ as
    $w\downarrow \ell$ for some $s>-1$, the Markov chain with
    transition kernel $\pi_{inv}$ can be normalized by the
    Heffernan-Tawn family with $\alpha = \ell/(1-\ell) \in (0,1)$ and
    $\beta = (s+1)/(s+2) \in (0,1)$ \citep{paptawn15}.
  \end{enumerate}
  In both cases the temporal location-scale normings and tail chains
  are as in Example~\ref{ex:ht}.
\end{example}

The next examples require more subtle normings than the Heffernan-Tawn
family. We also provide their normalizations after $t$ steps and the
respective tail chains. The relations $\loc_{t+1}(v) \doteq \loc \circ
\loc_t(v)$ and $\scale_{t+1}(v) \doteq \scale \circ \scale_t(v)$ hold
asymptotically as $v \uparrow \infty$ in these cases. {In each case
  for all $t$, $a_t(x)$ is regularly varying with index 1, i.e.,
  $a_t(x) =x{\cal L}_t(x)$, where ${\cal L}_t$ is a slowly varying
  function and the process is asymptotically independent. This seems
  contrary to the canonical class of Example 2 (i) where when
  $a_t(x)=x$ the process was asymptotically dependent. The key
  difference however is that as $x\uparrow \infty$, ${\cal L}_t(x)
  \downarrow 0$, so $a_t(x)/x\downarrow 0$ as $x\uparrow \infty$
  for all $t$ and hence subsequent values of the process are
  necessarily of smaller order than the first large value in the chain.}

\begin{example} \emph{(Examples beyond the Heffernan-Tawn
    normalization based on inverted max-stable distributions)}
  \begin{enumerate}[label={(\roman*)}]
  \item \emph{(Inverted max-stable copula with H\"usler-Reiss resp.\ Smith dependence)}\\
    If the exponent measure $V$ is the dependence model (cf.\
    \cite{huslreis89} Eq.~(2.7) or \cite{smit90b} Eq.~(3.1))
    \begin{align*}
      V(x,y) = \frac{1}{x}\Phi\left(\frac{\gamma}{2} +
        \frac{1}{\gamma}\log\left(\frac{y}{x}\right)\right) +
      \frac{1}{y}\Phi\left(\frac{\gamma}{2} +
        \frac{1}{\gamma}\log\left(\frac{x}{y}\right)\right)
    \end{align*}
    for some $\gamma>0$, then assumption \textbf{A1} is satisfied with
    the normalization
    \begin{align*}
      \loc(v)= v \,\exp\left(-\gamma(2\log v)^{1/2}+\gamma \frac{\log
          \log v}{\log v} + \gamma^2/2\right),\, \scale(v)=\loc(v)/
      (\log v)^{1/2}
    \end{align*}
    and limiting distribution $K(x) = 1 - \exp\left(- (8
      \pi)^{-1/2}\gamma \exp\left(\sqrt{2}x/\gamma\right)\right)$
    \citep{paptawn15}. The normalization after $t$ steps
    \begin{align*}
      & \loc_t(v)= v \,\exp\left(-\gamma t(2\log v)^{1/2} +\gamma t
        \frac{\log \log v}{\log v} + (\gamma t)^2/2\right),\,
      \scale_t(v)= \loc_t(v) / (\log v)^{1/2}
    \end{align*}
    yields, after considerable manipulation, the random walk tail
    chain
    \begin{align*}
      M_{t+1}=M_t+\varepsilon_\tpo
    \end{align*}
    with remainder terms $r^\loc_\tpo(v,x)=O\left((\log
      v)^{-1/2}\right), r^\scale_\tpo(v,x)=O\left((\log
      v)^{-1/2}\right)$.
  \item \emph{(Inverted max-stable copula with different type of decay)}\\
    If the density $h$ satisfies $h(w)\doteq w^\delta
    \exp\left(-\kappa w^{-\gamma}\right)$ as $w\downarrow 0$, where
    $\kappa,\gamma>0$ and $\delta \in \RR$, then assumption
    \textbf{A1} is satisfied with the normalization
    \begin{align*}
      \loc(v)= v\left(\frac{\log v}{\kappa}\right)^{-1/\gamma}\left(1
        + (c/\gamma^2)\,\frac{\log \log v}{ \log
          v}\right),\,\scale(v)=a(v)/\log v,
    \end{align*}
    where $c=\delta + 2(1+\gamma)$ and limiting distribution $K(x) = 1
    - \exp\left\{-c \exp(\gamma x)\right\}$ 
    \citep{paptawn15}. Set $\zeta_t = \binom{t}{t-2} +
    \binom{t}{t-1}\,c$, $t\geq 2$. Then the normalization after $t$
    steps
    \begin{align*}
      \loc_t(v)= v\left(\frac{\log
          v}{\kappa}\right)^{-t/\gamma}\left(1 +
        (\zeta_t/\gamma^2)\,\frac{\log \log v}{ \log v}\right),\,
      \scale_t(v)=a_t(v)/\log v
    \end{align*}
    yields, after considerable manipulation, the random walk tail
    chain with drift
    \begin{align*}
      M_{t+1}= M_t - (t/\gamma^2)\log \kappa + \varepsilon_\tpo
    \end{align*}
    with remainder terms $r^\loc_\tpo(v,x)=O\left((\log\log v)^2/(\log
      v)\right)$, $r^\scale_\tpo(v,x)=O\left(\log\log v/\log
      v\right)$.
  \end{enumerate}
\end{example}

{Note that in Example 4 each of the tail chains is a random walk (with
  possible drift term), like for the asymptotically dependent case of
  Example 2 (i). This feature is unlike Examples 2 (ii) and (iii)
  which though also asymptotically independent processes have
  autoregressive tail chains. This shows that Example 4 illustrates
  two cases in a subtle boundary class where the norming functions are
  consistent with the asymptotic independence class and the tail chain
  is consistent with the asymptotic dependent class.}

To give an impression of the different behaviours of Markov chains in
extreme states Figure~\ref{fig:tail_chains} presents properties of the
sample paths of chains for an asymptotically dependent and various
asymptotically independent chains. These Markov chains are stationary
with unit exponential marginal distribution and are initialised with
$X_0=10$, the $1-4.54\times 10^{-5}$ quantile. In each case the copula
of $(X_t,X_{t+1})$ for the Markov chain is in the Heffernan-Tawn model
class with transition kernels and associated parameters $(\alpha,
\beta)$ as follows:

\begin{enumerate}[label={(\roman*)}]
\item Bivariate extreme value (BEV) copula, with logistic dependence
  and transition kernel $ \pi(x,y) = \pi_F(T(x),T(y)), $ where
  $ T(x)=-1/\log\left(1 - \exp(-x)\right) $ and
  \begin{align*}
    \pi_F(x,y) =
    \bigg\{1+\Big(\frac{y}{x}\Big)^{-1/\gamma}\bigg\}^{\gamma-1}
    \exp\left\{-\left(x^{-1/\gamma} +
        y^{-1/\gamma}\right)^\gamma\right\}
  \end{align*}
  with $\gamma=0.152$. The chain is asymptotically dependent, i.e.,
  $(\alpha, \beta)=(1,0)$.
\item Inverted BEV copula with logistic dependence and transition
  kernel
  \begin{align*}
    \pi(x,y) = 1-
    \bigg\{1+\Big(\frac{y}{x}\Big)^{1/\gamma}\bigg\}^{\gamma-1}
    \exp\left\{x-\left(x^{1/\gamma} +
        y^{1/\gamma}\right)^\gamma\right\}
  \end{align*}
  with $\gamma=0.152$.  The chain is asymptotically independent with
  $(\alpha, \beta)=(0,1-\gamma)$.
\item Exponential auto-regressive process with constant slowly varying
  function \cite[p.~285]{kulisoul} and transition kernel
  \begin{align*}
    \pi(x,y) = \left(1-\exp\left[ - \left\{U(y) - \phi
          U(x)\right\}\right]\right)_+
  \end{align*}
  where $U(x)=F^{\leftarrow}_V(1-\exp(-x))$ and $F_V$ is a
  distribution function satisfying $ F_V(y) = 1 -
  \int_{-1/(1-\phi)}^{(y+1)/\phi} \exp\left\{-\left(y-\phi
      x\right)\right\}F_V(dx) $ for all $y>-1/(1-\phi)$ with
  $\phi=0.8$. The chain is asymptotically independent with $(\alpha,
  \beta)=(\phi,0)$.
\item Gaussian copula with correlation parameter $\rho=0.8$. The chain
  is asymptotically independent with $(\alpha, \beta)=(\rho^2,1/2)$.
\end{enumerate}

\begin{figure}[h!]
  \centering
  \includegraphics[scale=0.8]{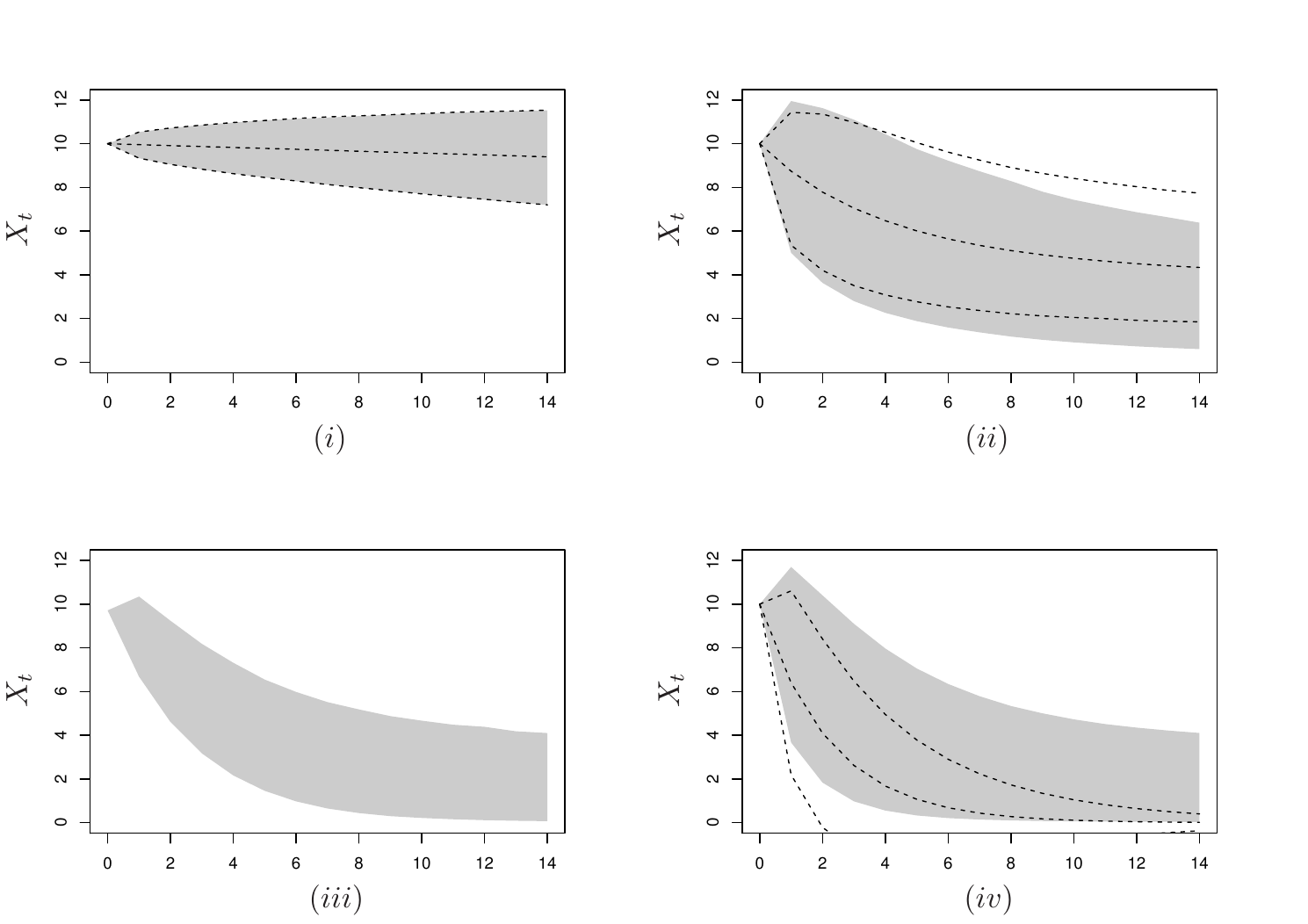}
  \caption{Four Markov chains in exponential margins 
    with different dependence structure and common initial extreme value of
    $x_0=10$. 
    Presented for each chain are: 
    2.5\% and 97.5\% quantiles of the actual chain $\{X_t\}$ started
    from $x_0=10$ (grey region); 2.5\% quantile, mean and 97.5\%
    quantile of the approximating chain $\{X^{TC}_t\}$ arising from
    the tail chain with $x_0=10$ (dashed lines, apart from (iii)).
    The copula of $(X_t,X_{t+1})$ comes from: (i): BEV copula, with
    logistic dependence structure, $\gamma=0.152$, (ii): inverted BEV
    copula with logistic dependence structure, $\gamma=0.152$, (iii):
    exponential auto-regressive process with $\phi=0.8$, (iv):
    Gaussian copula with $\rho=0.8$.}
  \label{fig:tail_chains}
\end{figure}

The parameters for chains (ii) and (iv) have been chosen such that the
coefficient of tail dependence \cite{ledtawn97} of the bivariate
margins is the same.  The plots compare the actual Markov chain
$\{X_t\}$ started from $X_0=10$ with the paths $\{X^{TC}_t\}$ arising
from the tail chain approximation $X_{t}^{TC} = a_t(X_0) +b_t(X_0)
M_t$, where $a_t$, $b_t$ and $M_t$ are as defined in
Example~\ref{ex:ht} and determined by the associated value of
$(\alpha,\beta)$ and the respective limiting kernel $K$.  The figure
shows both the effect of the different normalizations on the sample
paths and that the limiting tail chains provide a reasonable
approximation to the tail chain for this level of $X_0$, at least for
the first few steps.  Unfortunately, we were not able to derive the
limiting kernel $K$ from (iii) and so the limiting tail chain
approximation $\{X^{TC}_t\}$ is not shown in this case. Also note that
for the asymptotically independent processes and chain (iv) in
particular, there is some discrepancy between the actual and the
approximating limiting chains.  This difference is due to the slow
convergence to the limit here, a feature identified in the
multivariate context by \cite{hefftawn04} for chain (iv), but this
property can occur similarly for asymptotically dependent processes.

\section{Extensions}\label{sec:extensions}

In this section, we address several phenomena which have not yet been
covered by the preceding theory.  The information stored in the value
$X_0$ is often not good enough for assertions on the future due to
additional sources of randomness that influence the return to the body
of the marginal distribution or switching to a negative extreme state.
Let us assume, for instance, that the transition kernel of a Markov
chain encapsulates different modes of normalization.  If we use our
previous normalization scheme matching the dominating mode, the tail
chain will usually terminate in a degenerate state.  In order to gain
non-degenerate limits which allow for a refined analysis in such
situations, we will introduce random change-points that can detect the
misspecification of the norming and adapt the normings accordingly
after change-points. The first of the change-points plays a similar
role to the extremal boundary in \cite{resnzebe13a}. We also use this
concept to resolve some of the subtleties arising from random negative
dependence. The resulting limiting processes $\{M_t \,:\, t =
1,2,\dots \}$ of
\begin{align*}
  \left\{ \frac{X_t-\loc_t(X_0)}{\scale_t(X_0)} \,:\, t=1,2,\dots
  \right\} \,\bigg\vert\, X_0 > u
\end{align*}
as $u\uparrow \infty$ (with limits meant in finite-dimensional
distributions) will be termed \emph{hidden tail chains} if they are
based on change-points and adapted normings, even though $\{M_t\}$
need not be first order Markov chains anymore due to additional
sources of randomness in their update schemes. However, they reveal
additional (``hidden'') structure after certain change-points.  We
present such phenomena in the sequel by means of some examples which
successively reveal increasing complex structure. Weak convergence
will be meant on the extended real line including $\pm \infty$ if mass
escapes to these states.

\subsection{Hidden tail chains}

\paragraph{Mixtures of different modes of normalization}

\begin{example}\label{ex:asymm_BEV} \emph{(Bivariate extreme value copula with asymmetric logistic dependence)}\\
  The transition kernel $\pi_F$ arising from a bivariate extreme value
  distribution with asymmetric logistic distribution on Fr\'echet
  scale \citep{tawn88} is given by
  \begin{align*}
    \pi_{F}(x,y) = -x^2 \frac{\partial}{\partial x}V(x,y)
    \exp\bigg(\frac{1}{x}-V(x,y)\bigg),
  \end{align*}
  where $V(x,y)$ is the exponent function
  \begin{align*}
    V(x,y)= \frac{1-\varphi_1}{x} + \frac{1-\varphi_2}{y} +
    \bigg\{\Big(\frac{\varphi_{1}}{x}\Big)^{1/\nu} +
    \Big(\frac{\varphi_{2}}{y}\Big)^{1/\nu} \bigg\}^\nu, \qquad
    \varphi_1,\varphi_2,\nu \in (0,1).
  \end{align*}
  Changing the marginal scale from standard Fr{\'e}chet to standard
  exponential margins yields the transition kernel
  \begin{align*}
    \pi(x,y)=\pi_{F}(T(x),T(y)), \quad \text{where} \quad
    T(x)=-1/\log\left(1 - \exp(-x)\right).
  \end{align*}
  The kernel $\pi$ converges weakly with two distinct normalizations
  \begin{align*}
    \pi(v,v+dx) \wk K_1(dx) \quad \text{and} \quad \pi(v,dx) \wk
    K_2(dx) \qquad \text{as $v \uparrow \infty$}
  \end{align*}
  to the distributions
  \begin{align*}
    K_1&=(1-\varphi_{1}) \delta_{-\infty} + \varphi_1G_1, \qquad \, G_1(x)= \bigg[1 + \bigg\{\frac{\varphi_{2}}{\varphi_{1}} \exp(-x)\bigg\}^{1/\nu}\bigg]^{\nu-1}\\
    K_2&= (1-\varphi_{1})F_E+ \varphi_1 \delta_{+\infty}, \qquad
    F_{E}(x)=\left(1-\exp(-x)\right)_+
  \end{align*}
  with entire mass on $[-\infty,\infty)$ and $(0,\infty]$,
  respectively.  In the first normalization, mass of the size
  $1-\varphi_1$ escapes to $-\infty$, whereas in the second
  normalization the complementary mass $\varphi_1$ escapes to
  $+\infty$ instead. The reason for this phenomenon is that both
  normalizations are related to two different modes of the conditioned
  distribution of $X_{t+1}\mid X_{t}$ of the Markov chain, cf.\
  Figure~\ref{fig:as_log}. However, these two modes can be separated,
  for instance, by any line of the form $(x_t,c x_t)$ for some $c \in
  (0,1)$ as illustrated in Figure~\ref{fig:as_log} with $c=1/2$.  This
  makes it possible to account for the mis-specification in the two
  normings above by introducing the change-point
  \begin{align}
    T^X = \inf \left\{ t \geq 1 \,:\, X_{t} \leq c X_{t-1} \right\},
    \label{eq:changepoint}
  \end{align}
  i.e., $T^X$ is the first time that $c$ times the previous state is
  not exceeded anymore.  Adjusting the above normings to
  \begin{align*}
    a_t(v) &= \begin{cases}v & \quad t < T^X, \\ 0 & \quad t \geq
      T^X, \end{cases} \quad \text{and}\quad b_t(v)=1,
  \end{align*}
  yields the following hidden tail chain, which is built on an
  independent i.i.d.\ sequence $\{B_t \,:\, t = 1,2, \dots \}$ of
  latent Bernoulli random variables $B_t \sim \text{Ber}(\varphi_1)$
  and the hitting time $T^{B}= \inf \{ t \geq 1 \,:\, B_t = 0 \}$.
  Its initial distribution is given by
  \begin{align*}
    \Pr(M_{1} \leq x) =
    \begin{cases}
      G_1(x)
      & \quad T^{B}>1,\\
      F_E(x) & \quad T^{B}=1,
    \end{cases}
  \end{align*}
  and its transition mechanism is
  \begin{align*}
    \Pr(M_{t} \leq y \mid M_{t-1}=x) =
    \begin{cases}
      G_1(y-x)
      & \quad t< T^{B},\\
      F_E(y) & \quad t=T^{B},\\
      \pi(x,y) & \quad t > T^{B}.
    \end{cases}
  \end{align*}
  In other words, the tail chain behaves like a random walk with
  innovations from $K_1$ as long as it does not hit the value
  $-\infty$ and, if it does, the norming changes instead, such that
  the original transition mechanism of the Markov chain is started
  again from an independent exponential random variable.
\end{example}

  \begin{figure}[htpb!]
    \centering
    \includegraphics[scale=0.8]{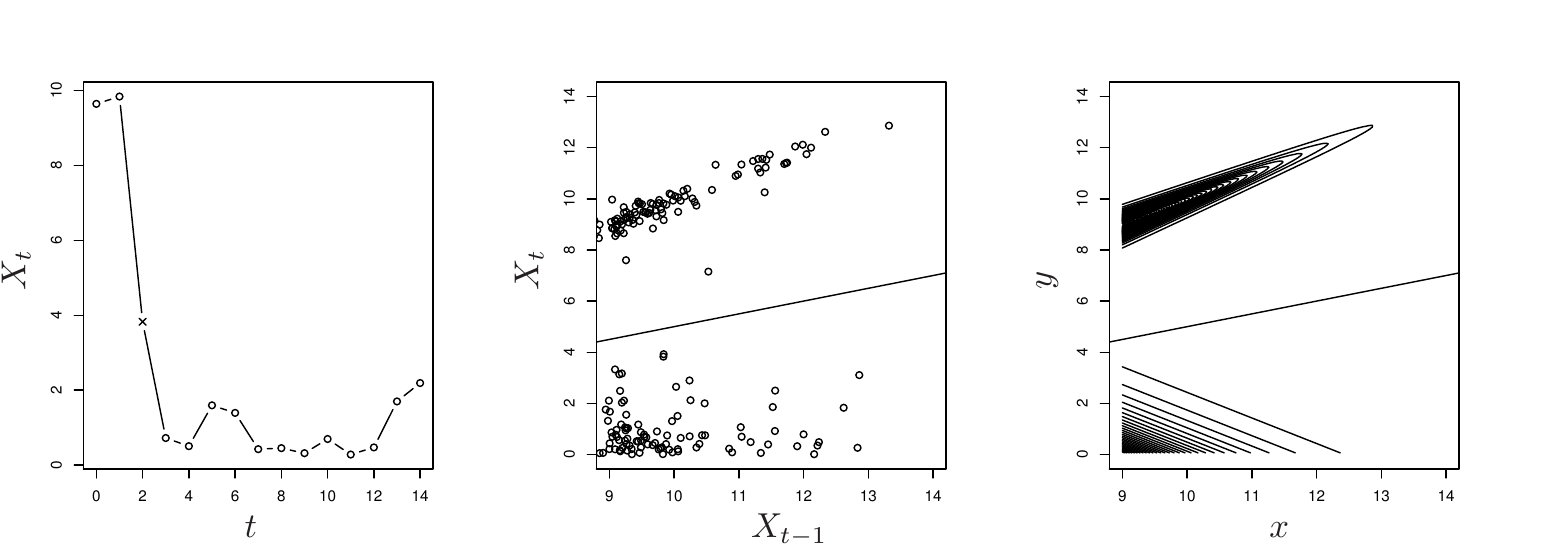}
    \caption{Left: time series plot showing a single realisation from
      the Markov chain with asymmetric logistic dependence,
      initialised from the distribution $X_0 \mid X_0>9$.  The
      change-point $T^X=2$ with $c=1/2$ (cf.\
      Eq.~\eqref{eq:changepoint}) is highlighted with a cross.\
      Centre: scatterplot of consecutive states $(X_{t-1},X_{t})$, for
      $t=1,\hdots,T^X$ with $c=1/2$, drawn from 1000 realisations of
      the Markov chain {initialised from $X_0 \mid X_0>9$} and line
      $X_{t}=X_{t-1}/2$ superposed.\ Right: Contours of joint density
      of asymmetric logistic distribution with exponential margins and
      line $y=x/2$ superposed.  The asymmetric logistic parameters
      used are $\varphi_1=\varphi_2=0.5$ and $\gamma=0.152$.}
    \label{fig:as_log}
  \end{figure}

In Example~\ref{ex:asymm_BEV} the adjusted tail chain starts as a
random walk and then permanently terminates in the transition
mechanism of the original Markov chain after a certain change-point
that can distinguish between two different modes of normalization.
These different modes arise as the conditional distribution 
of $X_{t+1}|X_t$ is essentially a mixture
distribution when $X_t$ is large with one component of the mixture
returning the process to a non-extreme state.

{ The following example extends this mixture structure
    to the case where both components of the mixture keep the process
    in an extreme state, but with different Heffernan and Tawn
    canonical family norming needed for each component. The first
    component gives the strongest form of extremal dependence. The
    additional complication that this creates is that there is now a
    sequence of change-points, as the process switches from one
    component to the other, and the behaviour of the resulting tail
    chain subtly changes between these.}

\begin{example}\label{ex:ht-mix}
  \emph{(Mixtures from the canonical Heffernan-Tawn model)}\\
  For two transition kernels $\pi_1$ and $\pi_2$ on the standard
  exponential scale, each stabilizing under the Heffernan-Tawn
  normalization
  \begin{align*}
    \pi_1(v,\alpha_1 v+v^{\beta_1}dx) \wk G_1(dx) \quad \text{and}
    \quad \pi_2(v,\alpha_2 v+v^{\beta_2}dx) \wk G_2(dx)
  \end{align*}
  as in Example~\ref{ex:ht} (ii) for $v \uparrow \infty$, let us
  consider the mixed transition kernel
  \begin{align*}
    \pi = \lambda \, \pi_1 + (1-\lambda) \, \pi_2, \qquad \lambda \in
    (0,1).
  \end{align*}
  Assuming that $\alpha_1>\alpha_2$, the kernel $\pi$ converges weakly
  on the extended real line with the two distinct normalizations
  \begin{align*}
    \pi(v,\alpha_1 v+v^{\beta_1}dx) \wk K_1(dx) \quad \text{and} \quad
    \pi(v,\alpha_2 v+v^{\beta_2}dx) \wk K_2(dx) \qquad \text{as $v
      \uparrow \infty$}
  \end{align*}
  to the distributions $K_1 = \lambda G_1 +
  (1-\lambda)\delta_{-\infty}$ and $K_2= (1-\lambda) G_2 + \lambda
  \delta_{+\infty}$,
  with mass $(1-\lambda)$ escaping to $-\infty$ in the first case and
  complementary mass $\lambda$ to $+\infty$ in the second
  case. Similarly to Example~\ref{ex:asymm_BEV}, the different modes
  of normalization for the consecutive states $(X_t,X_{t+1})$ are
  increasingly well separated by any line of the form $(x_t, {cx_t})$
  with $c \in (\alpha_2,\alpha_1)$. In this situation, the following
  recursively defined sequence of change-points
  \begin{align*}
    T_1^X &= \inf \left\{ t \geq 1 \,:\, X_{t} \leq c X_{t-1} \right\}\\
    T_{k+1}^X &= \begin{cases}
      \inf \left\{ t \geq T^X_k+1 \,:\, X_{t} > c X_{t-1} \right\} & \quad k \text{ odd},\\
      \inf \left\{ t \geq T^X_k+1 \,:\, X_{t} \leq c X_{t-1} \right\}
      & \quad k \text{ even}
    \end{cases}
  \end{align*}
  and the normings
  \begin{align*}
    \loc_t(v) = n^\alpha_t v, \quad \scale_t(v) =
    \begin{cases}
      v^{\beta_1} & \quad t <  T^X_1, \\
      v^{\beta_2} & \quad T^X_1=1 \text{ and } t <  T^X_2, \\
      v^{\max\{\beta_1,\beta_2\}} & \quad t \geq T^X_1, \text{ unless
      } T^X_1=1 \text{ and } t<T^X_2
    \end{cases}
  \end{align*}
  with
  \begin{align*}
    n^\alpha_t =
    \begin{cases}
      \alpha^t_1 & t < T^X_1,\\
      \alpha_1^{(S_k^\text{odd}-1)-S_k^\text{even}}\alpha_2^{t+S_k^\text{even}-(S_k^\text{odd}-1)}& T^X_k\leq t < T^X_{k+1}, \, k \text{ odd},\\
      \alpha_1^{t+S_k^\text{odd}-S_k^\text{even}}\alpha_2^{S_k^\text{even}-S_k^\text{odd}}&
      T^X_k\leq t < T^X_{k+1}, \, k \text{ even},
    \end{cases}
  \end{align*}
  and
  \begin{align*}
    S_k^\text{odd/even}=\sum_{\substack{j=1,\dots,k, \\ j \text{
          odd/even}}} T_j^X
  \end{align*}
  leads to a variety of transitions into less extreme states,
  depending on the ordering of $\beta_2$ and $\beta_1$. As in
  Example~\ref{ex:asymm_BEV}, the hidden tail chain can be based again
  on a set of latent Bernoulli variables $\{B_t \,:\, t = 1,2,\dots\}$
  with $B_t \sim \text{Ber}(\lambda)$. It has the initial distribution
  \begin{align*}
    M_{1} \sim
    \begin{cases}
      G_1 & \quad T^{B}_1>1, \\
      G_2 & \quad T^{B}_1=1,
    \end{cases}
  \end{align*}
  and is not a first order Markov chain anymore, as its transition
  scheme takes the position among the change-points
  \begin{align*}
    T^B_1&=\inf\{t \geq 1 \,:\, B_t \neq B_{t-1}\}\\
    T^B_{k+1}&=\inf\{t \geq T^{B}_{k}+1 \,:\, B_t \neq B_{t-1}\},
    \quad k=1,2,\dots,
  \end{align*}
  into account as follows
  \begin{align*}
    M_{t+1} =
    \begin{cases}
      \alpha_1 M_t + (n^\alpha_t)^{\beta_1} \eps_t^{(1)} & \quad
      t+1<T^B_1
      \text{ or } T^B_k \leq t+1 < T^B_{k+1}, \, k \text{ even}, \, \beta_1 \geq \beta_2,\\[-1mm]
      & \quad \text{unless } T^B_1=1, \, t+1=T^B_2 \text{ and } \beta_1 > \beta_2,
      \\[-1mm]
      \alpha_2 M_t + (n^\alpha_t)^{\beta_2} \eps_t^{(2)} 
      & \quad T^B_1=1 \text{ and } t+1 < T^B_2, \, \beta_1 > \beta_2,\\[-1mm]
      & \quad \text{or } T^B_k \leq t+1 < T^B_{k+1}, \, k \text{ odd}, \, \beta_1 \leq \beta_2,\\[-1mm]
      & \quad \text{unless } t+1=T_1 \text{ and } \beta_1 < \beta_2,\\[-1mm]
      (n^\alpha_t)^{\beta_1} \eps_t^{(1)} 
      & \quad T^B_1=1 \text{ and } t+1=T^B_2, \, \beta_1>\beta_2,\\[-1mm]
      (n^\alpha_t)^{\beta_2} \eps_t^{(2)} & \quad t+1=T^B_1, \, \beta_1<\beta_2,\\[-1mm]
      \alpha_1 M_t & \quad T^B_k \leq t+1 < T^B_{k+1}, \, k \text{
        even}, \, \beta_1 < \beta_2,\\[-1mm]
      \alpha_2 M_t  & \quad T^B_k \leq t+1 < T^B_{k+1}, \, k \text{ odd}, \, \beta_1 > \beta_2,\\[-1mm]
      & \quad \text{unless } T^B_1=1,\, k=1 \text{ and } \beta_1 > \beta_2.
    \end{cases}
  \end{align*}
  The independent innovations are drawn from either $\eps_t^{(1)} \sim
  G_1$ or $\eps_t^{(2)} \sim G_2$.\ The hidden tail chain can
  transition into a variety of forms depending on the characteristics
  of the transition kernels $\pi_1$ and $\pi_2$. According to the
  ordering of the scaling power parameters $\beta_1,\beta_2$, the tail
  chain at the transition points can degenerate to a scaled value of
  the previous state or independent of previous values.
\end{example}

\paragraph{Returning chains}
Finally, we consider Markov processes which can return to extreme
states.  Examples include tail switching processes, i.e., processes
that are allowed to jump between the upper and lower tail of the
marginal stationary distribution of the process.  To facilitate
comparison, we use the standard Laplace distribution
\begin{align}
  F_L(x)=\left\{
    \begin{array}{ll}
      \frac{1}{2} \exp(x) & \quad x<0,\\
      1-\frac{1}{2}\exp(-x) & \quad x\geq 0. 
    \end{array} \right. 
  \label{eq:lap_mar}
\end{align}
as a common marginal, so that both lower and upper tail is of the same
exponential type.

\begin{example} \label{ex:sm_roo}
  \emph{(Rootz\'{e}n/Smith tail switching process with Laplace margins)}\\
  As in \cite{Smith92} and adapted to our chosen marginal scale,
  consider the stationary Markov process that is initialised from the
  standard Laplace distribution and with transition mechanism built on
  independent i.i.d.\ sequences of standard Laplace variables $\{L_t
  \,:\, t = 0,1,2, \dots \}$ and Bernoulli variables $\{B_t \,:\, t =
  0,1,2, \dots \}$ with $B_t \sim \text{Ber}(0.5)$ as follows
  \begin{align*}
    X_{t+1} = -B_t X_t + (1-B_t) L_t=
    \begin{cases}
      -X_t & \quad B_t=1, \\
      L_t & \quad B_t=0.
    \end{cases}
  \end{align*}
  The following convergence situations arise as $X_0$ goes to its
  upper or lower tail
  \begin{align*}
    X_1+X_0 \mid X_0=x_0 &\wk \begin{cases}
      0.5\,(\delta_0 + \delta_{+\infty}) & \,\,\quad x_0 \uparrow +\infty, \\
      0.5\,(\delta_{-\infty} + \delta_{0}) & \,\,\quad x_0 \downarrow
      -\infty,
    \end{cases}
    \\
    X_1\mid X_0=x_0 &\wk \begin{cases}
      0.5\,(\delta_{-\infty} + F_L)& \quad x_0 \uparrow +\infty, \\
      0.5\,(F_L + \delta_{\infty}) & \quad x_0 \downarrow -\infty,
    \end{cases}
  \end{align*}
  where, in addition to their finite components $\delta_0$ and $F_L$,
  the limiting distributions collect complementary masses at $\pm
  \infty$.  Introducing the change-point
  \begin{align*}
    T^X = \inf\{t \geq 1 \,:\, X_{t} \neq X_{t-1}\}
  \end{align*}
  and adapted time-dependent normings
  \begin{align*}
    \loc_t(v) = \begin{cases}
      (-1)^t v & \quad t < T^X, \\
      0 & \quad t \geq T^X,
    \end{cases}
    \quad \text{and} \quad \scale_{t} (v) = 1,
  \end{align*}
  leads to the tail chain
  \begin{align*}
    M_{t} = \begin{cases} 0 &  \quad t < T^X,\\
      X'_{t-T^X} & \quad t\geq T^X,
    \end{cases}
  \end{align*}
  where $\{X'_{t} \,:\, t=0,1,2,\dots\}$ is a copy of the original
  Markov chain $\{X_{t} \,:\, t=0,1,2,\dots\}$.
\end{example}

Example~\ref{ex:sm_roo} illustrates that the Markov chain can return
to the extreme states visited before the termination time, it strictly
alternates between $X_0$ and $-X_0$.  Similarly with
Example~\ref{ex:asymm_BEV}, the hidden tail chain permanently
terminates in finite time and the process jumps to a non-extreme event
in the stationary distribution of the process.  The next example shows
a tail switching process with non-degenerate tail chain that does not
suddenly terminate.

\begin{example} \label{ex:arch} \emph{(ARCH with Laplace margins)}\\
  In its original scale the ARCH(1) process $\{Y_t \,:\,
  t=0,1,2,\dots\}$ follows the transition scheme $Y_t = \left(\theta_0
    + \theta_1\, Y_{t-1}^2\right)^{1/2}W_t$ for some $\theta_0>0$,
  $0<\theta_1<1$ and an i.i.d.\ sequence $\{W_t\,:\,t=0,1,2,\dots\}$
  of standard Gaussian variables.  It can be shown that,
  irrespectively of how the process is initialised, it converges to a
  stationary distribution $F_\infty$, whose lower and upper tail are
  asymptotically equivalent to a Pareto tail, i.e.,
  \begin{align*}
    1 - F_{\infty}(x) = F_{\infty}(-x)\doteq c x ^{-\kappa} \qquad
    \text{as } x\uparrow\infty,
  \end{align*}
  for some $c,\kappa>0$ \citep{dehaetal89}.  Initialising the process
  from $F_{{\infty}}$ yields a stationary Markov chain, whose
  transition kernel becomes
  \begin{align*}
    \pi(x,y)=\Phi\left( \frac{F^{\leftarrow}_\infty
        (F_L(y))}{(\theta_0 + \theta_1
        (F_\infty^{\leftarrow}(F_L(x)))^2 )^{1/2}} \right)
  \end{align*}
  if the chain is subsequently transformed to standard Laplace
  margins.  It converges with two distinct normalizations
  \begin{align*}
    \pi(v, v+dx) \wk \begin{cases}
      K_+(dx) & \quad v \uparrow +\infty, \\
      K_-(dx) & \quad v \downarrow -\infty,
    \end{cases}
    \\
    \pi(v,-v+dx) \wk \begin{cases}
      K_-(dx)& \quad v \uparrow +\infty, \\
      K_+(dx) & \quad v \downarrow -\infty
    \end{cases}
  \end{align*}
  to the distributions $K_+=0.5(\delta_{-\infty}+G_+)$ and
  $K_-=0.5(G_-+\delta_{+\infty})$ with
  \begin{align*}
    G_+(x)=2\Phi\left(\frac{\exp(x/\kappa)}{\sqrt{\theta_1}}\right)-1
    \quad \text{and} \quad
    G_-(x)=2\Phi\left(-\frac{\exp(-x/\kappa)}{\sqrt{\theta_1}}\right).
  \end{align*}
  Here, the recursively defined sequence of change-points
  \begin{align*}
    T^X_1&=\inf\{t \geq 1 \,:\, \text{sign}(X_t) \neq \text{sign}(X_{t-1})\}\\
    T^X_{k+1}&=\inf\{t \geq T^{X}_{k}+1 \,:\, \text{sign}(X_t) \neq
    \text{sign}(X_{t-1})\}, \quad k=1,2,\dots,
  \end{align*}
  which documents the sign change, and adapted normings
  \begin{align*}
    \loc_t(v) = \begin{cases} v &\quad
      t< T^X_1 \text{ or } T^X_{k} \leq  t < T^{X}_{k+1},\, k \text{ even},\\
      -v & \quad \phantom{t< T^X_1 \text{ or }} T^X_{k} \leq t <
      T^X_{k+1},\, k \text{ odd,}
    \end{cases}
    \qquad \scale_{t} (v) = 1,
  \end{align*}
  lead to a hidden tail chain (which is not a first order Markov chain
  anymore) as follows. It is distributed like a sequence $\{M_t \,:\,
  t=1,2,\dots\}$ built on the change-points
  \begin{align*}
    T^B_1&=\inf\{t \geq 1 \,:\, B_t \neq B_{t-1}\}\\
    T^B_{k+1}&=\inf\{t \geq T^{B}_{k}+1 \,:\, B_t \neq B_{t-1}\},
    \quad k=1,2,\dots,
  \end{align*}
  of an i.i.d.\ sequence of Bernoulli variables $\{B_t \,:\, t = 1,2,
  \dots \}$ via the initial distribution
  \begin{align*}
    M_1 \sim \begin{cases}
      G_+ &\quad T^B_1>1,\\
      G_- &\quad T^B_1=1,
    \end{cases}
  \end{align*}
  and transition scheme
  \begin{align*}
    M_{t+1} = s_\tpo M_t + \eps_\tpo,
  \end{align*}
  where the sign $s_t$ is negative at change-points
  \begin{align*}
    s_\tpo = \begin{cases}
      -1 & \quad t+1=T^B_k \text{ for some } k=1,2,\dots,\\
      1 &\quad \text{else},
    \end{cases}
  \end{align*}
  and the independent innovations $\eps_\tpo$ are drawn from either
  $G_+$ or $G_-$ according to the position of $t+1$ within the
  intervals between change-points
  \begin{align*}
    \eps_\tpo \sim \begin{cases}
      G_+ & \quad t+1 < T^B_{1} \text{ or } T^B_k \leq t+1 < T^B_{k+1}, \, k \text{ even},\\
      G_- &\quad \phantom{t+1 < T^B_{1} \text{ or }} T^B_k \leq t+1 <
      T^B_{k+1}, \, k \text{ odd}.
    \end{cases}
  \end{align*}
\end{example}

\begin{remark}
  An alternative tail chain approach to Example~\ref{ex:arch} is to
  square the ARCH process, $Y_t^2$ instead of $Y_t$, which leads to a
  random walk tail chain as discussed in \cite{resnzebe13a}.  An
  advantage of our approach is that we may condition on an upper (or
  by symmetry lower) extreme state whereas in the squared process this
  information is lost and one has to condition on its norm being
  large.
\end{remark}

\subsection{Negative dependence}

In the previous examples the change from upper to lower extremes and
vice versa has been driven by a latent Bernoulli random variable.  If
the consecutive states of a time series are negatively dependent, such
switchings are almost certain.  An example is the autoregressive
Gaussian Markov chain in Example~\ref{ex:marginalscale}, in which case
the tail chain representation there trivially remains true even if the
correlation parameter $\rho$ varies in the negatively dependent regime
$(-1,0)$. More generally, our previous results may be transferred to
Markov chains with negatively dependent consecutive states when
interest lies in both upper extreme states and lower extreme states.
For instance, the conditions for Theorem~\ref{thm:tailchain} may be
adapted as follows.

\begin{description}[wide=0\parindent]
\item[Assumption C1]
  \emph{(behaviour of the next state as the previous state becomes extreme)}\\
  There exist measurable norming functions $\locpm(v), \locmp(v) \in
  \RR$, $\scalepm(v), \scalemp(v)>0$ and non-degenerate distribution
  functions $K_-$, $K_+$ on $\RR$, such that
  \begin{align*}
    &\pi(v, \locpm(v) + \scalepm(v) dx) \wk K_-(dx) \qquad \text{as }
    v
    \uparrow \infty,\\
    &\pi(v, \locmp(v) + \scalemp(v) dx) \wk K_+(dx) \qquad \text{as }
    v \downarrow -\infty.
  \end{align*}
\item[Assumption C2] \emph{(norming functions and update functions for
    the tail chain)}
  \begin{enumerate}[label={(\alph*)}]
  \item Additionally to $\loc_1=\locpm$ and $\scale_1=\scalepm$ assume
    there exist measurable norming functions $\loc_t(v) \in \RR$,
    $\scale_t(v)>0$ for $t=2,3,\dots$, such that, for all $x \in \RR$,
    $t=1,2,\dots$
    \begin{align*}
      \loc_t(v)+\scale_t(v)x \rightarrow \left\{ \begin{array}{ll}
          -\infty & \quad \text{$t$ odd,}\\ \infty & \quad \text{$t$
            even,} \end{array}\right.  \qquad \text{as } v \uparrow
      \infty.
    \end{align*}
  \item Set
    \begin{align*}
      \locOEtime=
      \begin{cases}
        \loc_+ &\quad t \text{ odd},\\
        \loc_- &\quad t \text{ even},
      \end{cases}
      \quad \text{and} \quad \scaleOEtime=
      \begin{cases}
        \scale_+ &\quad t \text{ odd},\\
        \scale_- &\quad t \text{ even}.
      \end{cases}
    \end{align*}
    and assume further that there exist continuous update functions
    \begin{align*}
      \psi_\tpo^\loc(x) &= \lim_{v\rightarrow \infty}\frac{
        \locOEtime\left(\loc_t(v)+\scale_t(v)x\right)
        -\loc_{t+1}(v)}{\scale_{t+1}(v)} \in \RR,
      \\
      \psi_\tpo^\scale(x) &= \lim_{v\rightarrow
        \infty}\frac{\scaleOEtime\left(\loc_t(v)+\scale_t(v)x\right)}{\scale_{t+1}(v)}
      > 0,
    \end{align*}
    defined for $x \in \RR$ and $t=1,2,\dots$, such that the remainder
    terms
    \begin{align*}
      r^\loc_\tpo(v,x)&= \frac{\loc_{t+1}(v) -
        \locOEtime(\loc_t(v)+\scale_t(v)x) +\scale_{t+1}(v)
        \psi^\loc_\tpo(x)}{\scaleOEtime(\loc_t(v)+\scale_t(v)x)},\\
      r^\scale_\tpo(v,x)&= 1-\frac{\scale_{t+1}(v)
        \psi^\scale_\tpo(x)}{\scaleOEtime(\loc_t(v)+\scale_t(v)x)}
    \end{align*}
    converge to $0$ as $v\to \infty$ and both convergences hold
    uniformly on compact sets in the variable $x \in \RR$.
  \end{enumerate}
\end{description}

Using the proof of Theorem~\ref{thm:tailchain}, it is straightforward
to check that the following version adapted to negative dependence
holds true.

\begin{theorem}
  \label{thm:tailchain:negdep}
  Let $\{X_t \,:\, t = 0,1,2, \dots \}$ be a homogeneous Markov chain
  satisfying assumption \fz\,, \textbf{C1} and \textbf{C2}.  Then, as
  $u \uparrow \infty$,
  \begin{align*}
    \left(\frac{X_0 -
        u}{\sigma(u)},\frac{X_1-\loc_1(X_0)}{\scale_1(X_0)},\frac{X_2-\loc_2(X_0)}{\scale_2(X_0)},\dots,\frac{X_t-\loc_t(X_0)}{\scale_t(X_0)}\right)
    \,\bigg\vert\, X_0 > u
  \end{align*}
  converges weakly to $\left(E_0,M_1,M_2,\dots,M_t\right)$, where
  \begin{enumerate}[label={(\roman*)}]
  \item $E_0 \sim H_0$ and $(M_1,M_2,\dots,M_t)$ are independent,
  \item $M_1 \sim K_-$ and $M_{t+1} =
    \psi^\loc_\tpo(M_j)+\psi^\scale_\tpo(M_t) \, \eps_\tpo, \,
    t=1,2,\dots$ for an independent
    sequence 
    of innovations
    \begin{align*}
      \eps_\tpo \sim \left\{ \begin{array}{ll} K_+ & \quad \text{$t$
            odd,} \\ K_- & \quad \text{$t$
            even.} \end{array} \right.
    \end{align*}
  \end{enumerate}
\end{theorem}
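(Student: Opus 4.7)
The plan is to mimic the inductive argument used to prove Theorem~\ref{thm:tailchain}, with the only new ingredient being that the sign of the state becoming extreme alternates with $t$, so at each step one invokes the sign-appropriate one-step convergence from assumption \textbf{C1}. I would proceed by induction on $t\geq 1$. The base case $t=1$ combines assumption \fz\, for the first coordinate $(X_0-u)/\sigma(u)$ with the first convergence in \textbf{C1}, applied with the random argument $X_0$: since $X_0>u\uparrow\infty$, the rescaled kernel $\pi(X_0,\locpm(X_0)+\scalepm(X_0)\,dx)$ is well approximated by $K_-$, yielding $M_1\sim K_-$, independent of $E_0\sim H_0$ because the limiting kernel does not depend on $v$.

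For the inductive step, fix $f\in C_b(\RR^{t+2})$ and condition on $(X_0,X_1,\dots,X_t)$ via the Markov property. Writing $v=X_0$, $y_j=(X_j-\loc_j(v))/\scale_j(v)$ and $x_t=\loc_t(v)+\scale_t(v)y_t$, the inner conditional expectation becomes an integral of $f$ against $\pi(x_t,\loc_{t+1}(v)+\scale_{t+1}(v)\,dz)$. The key algebraic step is to rewrite the inner norming as an affine function of $\locOEtime(x_t)+\scaleOEtime(x_t)z'$ (with $z'$ affine in $z$) using the definitions of $\psi^\loc_t,\psi^\scale_t$ and the remainder terms $r^\loc_t,r^\scale_t$, so that after a change of variables the integrand reads
\begin{align*}
\int f\!\left(\dots,\psi^\loc_t(y_t)+\psi^\scale_t(y_t)z'+o(1)\right)\pi\!\left(x_t,\locOEtime(x_t)+\scaleOEtime(x_t)\,dz'\right).
\end{align*}
By \textbf{C2}(a), $x_t\to-\infty$ for $t$ odd and $x_t\to+\infty$ for $t$ even; accordingly \textbf{C1} delivers weak convergence of the rescaled kernel to $K_+$ or $K_-$ respectively. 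Uniform convergence on compacts of $r^\loc_t,r^\scale_t$ and continuity of $\psi^\loc_t,\psi^\scale_t$ allow the usual interchange of limit and integration, producing an innovation $\eps_t$ independent of $(E_0,M_1,\dots,M_t)$ distributed as $K_+$ or $K_-$ according to the parity of $t$; the induction hypothesis then handles the remaining $t+1$ coordinates and yields the recursion $M_{t+1}=\psi^\loc_t(M_t)+\psi^\scale_t(M_t)\,\eps_t$ in the claimed form.

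The main obstacle is the justification of invoking \textbf{C1} at the \emph{random} argument $X_t$ rather than a deterministic diverging $v$, together with the careful bookkeeping that switches between the $v\uparrow\infty$ and $v\downarrow-\infty$ regimes at each step. This is resolved exactly as in the proof of Theorem~\ref{thm:tailchain}: the kernel convergence in \textbf{C1} can be upgraded to a convergence that is uniform in the argument ranging over a compact set of $y_t$ values (via a standard Portmanteau / dominated convergence argument using $f\in C_b$), and the remainder terms $r^\loc_t,r^\scale_t$ vanish uniformly on compacts in $y_t$, so the substitution $v\rightsquigarrow X_t$ does not disturb the limit. The only genuinely new observation is the alternation of the limiting direction of $x_t$, which is already encoded in the definitions of $\locOEtime,\scaleOEtime$, so no further modification is required beyond carrying this parity label through the induction.
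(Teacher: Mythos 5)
Your proposal is correct and matches the paper's approach: the paper gives no separate argument for Theorem~\ref{thm:tailchain:negdep}, stating only that it follows by the proof of Theorem~\ref{thm:tailchain}, and your write-up is exactly that adaptation — the same induction and the same uniform-on-compacts control of the normalized kernel, with the single new ingredient being that \textbf{C2}(a) sends the previous state to $-\infty$ for $t$ odd and $+\infty$ for $t$ even, so \textbf{C1} supplies $K_+$ or $K_-$ accordingly. The only detail worth flagging is that the tightness argument (Lemma~\ref{lemma:tightness}) must also be invoked in the $v\downarrow-\infty$ regime, which holds by the obvious symmetric version.
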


\begin{remark} 
  Due to different limiting behaviour of upper and lower tails, the
  tail chain $\{M_t \,:\, t = 0,1,2, \dots \}$ from
  Theorem~\ref{thm:tailchain:negdep} has a second source of potential
  non-homogeneity, since the innovations $\eps_\tpo$ will be generally
  not i.i.d.\ anymore, cf.\ also Remark~\ref{rk:nonhom}.
\end{remark}

\begin{example} \label{ex:ht-neg}
  \emph{(Heffernan-Tawn normalization in case of negative dependence)}\\
  Consider a stationary Markov chain with standard Laplace
  margins~\eqref{eq:lap_mar} and transition kernel $\pi$ satisfying
  \begin{align*}
    &\pi(v, \alpha_- v + |v|^\beta dx) \wk K_-(dx) \qquad
    \text{as } v \uparrow \infty,\\
    &\pi(v, \alpha_+ v + |v|^\beta dx) \wk K_+(dx) \qquad \text{as } v
    \downarrow -\infty.
  \end{align*}
  for some $\alpha_- , \alpha_+ \in (-1,0)$ and $\beta \in [0,1)$.
  Then the normalization after $t$ steps
  \begin{align*}
    a_t(v) &= \left\{ \begin{array}{ll} \alpha_-^{(t+1)/2}
        \alpha_+^{(t-1)/2} v & \quad \text{$t$ odd,} \medskip\\
        \alpha_-^{t/2} \alpha_+^{t/2} v & \quad \text{$t$
          even,} \end{array} \right. \\ b_t(v)&=|v|^\beta
  \end{align*}
  yields the tail chain
  \begin{align*}
    M_{t+1} = \left\{ \begin{array}{ll} \alpha_+ M_t + \left\lvert
          \alpha_-^{(t+1)/2} \alpha_+^{(t-1)/2}\right\rvert^\beta
        \varepsilon_\tpo^+ & \quad \text{$t$ odd,} \medskip\\ \alpha_-
        M_t + \left\lvert\alpha_-^{t/2}
          \alpha_+^{t/2}\right\rvert^\beta \varepsilon_\tpo^- & \quad
        \text{$t$ even,} \end{array} \right.
  \end{align*}
  with independent innovations $\eps_\tpo^+ \sim K_+$ and $\eps_\tpo^-
  \sim K_-$.
\end{example}

\begin{example} \emph{(negatively dependent Gaussian transition kernel with Laplace margins)}\\
  Consider as in Example~\ref{ex:marginalscale} a stationary Gaussian
  Markov chain with standard Laplace margins and $\rho\in (-1,0)$.
  Assumption \textbf{C1} is satisfied with $\locpm(v) = \locmp(v) =
  -\rho^2 v$, $\scalepm(v)= \scalemp(v)=v^{1/2}$ and
  $K(x)=K_-(x)=K_+(x)=
  \Phi\left(x/(2\rho^2(1-\rho^2))^{1/2}\right)$. Then the
  normalization after $t$ steps $a_t(v) = (-1)^t\rho^{2t} v$ and
  $b_t(v) = |v|^\beta$ yields the tail chain $M_{t+1} = -\rho^2 M_t +
  (-\rho)^{t}\varepsilon_\tpo$ with independent innovations $\eps_\tpo
  \sim K$.
\end{example}

\begin{remark}
  If the $\beta$-parameter of the Heffernan-Tawn normalization in
  Example~\ref{ex:ht-neg} is different for lower and upper extreme
  values, one encounters similar varieties of different behaviour as
  in Example~\ref{ex:ht-mix}.
\end{remark}

\section{Proofs}\label{sec:proofs}

\subsection{Proofs for Section~\ref{sec:tailchain}}

Some techniques in the followings proofs are analogous to
\cite{kulisoul} with adaptions to our situation including the random
norming as in \cite{janssege14}.  By contrast to previous accounts, we
have to control additional remainder terms, which make the auxiliary
Lemma~\ref{lemma:composition:locunif:cvg} necessary.  The following
result is a preparatory lemma and the essential part of the induction
step in the proof of Theorem~\ref{thm:tailchain}.

\begin{lemma}\label{lemma:for:induction:step} 
  Let $\{X_t \,:\, t = 0,1,2, \dots \}$ be a homogeneous Markov chain
  satisfying assumptions \textbf{A1} and \textbf{A2}. Let $g \in
  C_b(\RR)$. Then, for $t=1,2,\dots$, as $v \uparrow \infty$,
  \begin{align}\label{eq:for:induction:step}
    \int_\RR g(y) \pi(\loc_t(v) +\scale_{t}(v) x , \loc_{t+1}(v)
    +\scale_{t+1}(v) dy ) \rightarrow \int_\RR
    g(\psi^\loc_\tpo(x)+\psi^\scale_\tpo(x)y) K(dy)
  \end{align}
  and the convergence holds uniformly on compact
  sets in the variable $x \in \RR$.
\end{lemma}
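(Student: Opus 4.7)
The plan is to reformulate the left-hand side of \eqref{eq:for:induction:step} so that the kernel appears in the form normalized by $\loc$ and $\scale$ at the point $w=\loc_t(v)+\scale_t(v)x$, and then apply assumption \textbf{A1}. Performing the change of variables $\loc_{t+1}(v)+\scale_{t+1}(v)y=\loc(w)+\scale(w)z$ rewrites the integral as
$$\int_{\RR} g\bigl(\varphi_v(x,z)\bigr)\,\pi\bigl(w,\,\loc(w)+\scale(w)\,dz\bigr),\qquad \varphi_v(x,z):=\frac{\loc(w)-\loc_{t+1}(v)+\scale(w)z}{\scale_{t+1}(v)}.$$
Substituting the definitions of $r^\loc_\tpo(v,x)$ and $r^\scale_\tpo(v,x)$ from assumption \textbf{A2}(b) and solving yields the clean expression
$$\varphi_v(x,z)=\psi^\loc_\tpo(x)+\frac{\psi^\scale_\tpo(x)}{1-r^\scale_\tpo(v,x)}\bigl(z-r^\loc_\tpo(v,x)\bigr).$$

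Because the remainder terms tend to $0$ uniformly on compact sets in $x$ and because $\psi^\loc_\tpo,\psi^\scale_\tpo$ are continuous with $\psi^\scale_\tpo>0$, it follows that $\varphi_v(x,z)\to \varphi_\infty(x,z):=\psi^\loc_\tpo(x)+\psi^\scale_\tpo(x)z$ uniformly on compact subsets of $\RR\times\RR$. Uniform continuity of $g$ on compact sets (and boundedness of $g$) then gives $g\circ\varphi_v\to g\circ\varphi_\infty$ locally uniformly in $(x,z)$, while $\|g\circ\varphi_v\|_\infty\le\|g\|_\infty$. Assumption \textbf{A1} asserts $\pi(w,\loc(w)+\scale(w)\,dz)\wk K(dz)$ as $w\uparrow\infty$, and assumption \textbf{A2}(a) together with the positivity of $\scale_t$ ensures that $w=w(v,x)=\loc_t(v)+\scale_t(v)x\to\infty$ uniformly for $x$ in any compact $C\subset\RR$ (applying \textbf{A2}(a) at the left endpoint of $C$ suffices, since the map $x\mapsto \loc_t(v)+\scale_t(v)x$ is increasing in $x$). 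Combining these three ingredients via the auxiliary Lemma~\ref{lemma:composition:locunif:cvg}, which guarantees that weak convergence of probability measures together with locally uniformly convergent, uniformly bounded continuous integrands transfers to convergence of integrals that is uniform on compact parameter sets, delivers the claim.

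The main obstacle is precisely this joint uniformity: both the integrand $z\mapsto g(\varphi_\infty(x,z))$ and the measure $\pi(w(v,x),\loc(w(v,x))+\scale(w(v,x))\,dz)$ depend on the parameter $x$ over which we must control the convergence. If the auxiliary lemma were not invoked, one would argue by contradiction and compactness: suppose uniformity fails on some compact $C$; choose $v_n\uparrow\infty$ and $x_n\in C$ realising an error at least $\eps>0$; extract a convergent subsequence $x_n\to x^\ast\in C$; then equicontinuity of the family $\{z\mapsto g(\varphi_\infty(x,z)):x\in C\}$ (from continuity of $\psi^\loc_\tpo,\psi^\scale_\tpo,g$) together with tightness of the weakly convergent kernels and the uniform convergence $\varphi_{v_n}(x_n,\cdot)\to\varphi_\infty(x^\ast,\cdot)$ on compacts, allow one to bound all three terms in the decomposition $\int g\circ\varphi_{v_n}(x_n,\cdot)\,d\pi_n-\int g\circ\varphi_\infty(x_n,\cdot)\,dK$ by a swap through $\int g\circ\varphi_\infty(x^\ast,\cdot)\,dK$, yielding the contradiction.
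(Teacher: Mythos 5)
Your reduction is exactly the paper's: the same change of variables turns the left-hand side into $\int g(\varphi_v(x,z))\,\pi(w,\loc(w)+\scale(w)\,dz)$ with $w=\loc_t(v)+\scale_t(v)x$ and $\varphi_v(x,z)=\psi^\loc_\tpo(x)+\psi^\scale_\tpo(x)\frac{z-r^\loc_\tpo(v,x)}{1-r^\scale_\tpo(v,x)}$, and you correctly identify the three ingredients (local uniform convergence of $g\circ\varphi_v$, the kernel convergence of \textbf{A1}, and $w\to\infty$ uniformly on compacts via monotonicity in $x$ and \textbf{A2}(a)). Where you differ is in how the joint uniformity is closed: the paper argues directly, splitting the error by the triangle inequality into $\int\lvert f_v-f\rvert\,d\pi_{v,x}$ (controlled by the uniform tightness of the kernels, Lemma~\ref{lemma:tightness}, together with the composition Lemma~\ref{lemma:composition:locunif:cvg}) and $\int f\,d\pi_{v,x}-\int f\,dK$ (controlled by Lemma~\ref{lemma:scmapping}(ii)), whereas your fallback is a contradiction-plus-subsequence argument routed through $\int g(\varphi_\infty(x^\ast,\cdot))\,dK$; that argument is sound and arguably more economical, since it replaces the uniform-in-parameter weak convergence statement by a single application of weak convergence along a sequence. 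One caution: your main text attributes the combination of weak convergence with locally uniformly convergent integrands to Lemma~\ref{lemma:composition:locunif:cvg}, but that lemma only concerns the composition $g\circ\varphi_v\to g\circ\varphi_\infty$; the measure-theoretic step is Lemma~\ref{lemma:scmapping} plus Lemma~\ref{lemma:tightness}, and without your contradiction argument (or those lemmas) the citation as written would not support the claim. Also, the equicontinuity of $\{z\mapsto g(\varphi_\infty(x,z)):x\in C\}$ you invoke holds only on compact sets of $z$ (since $g$ is merely continuous and bounded), which is why the tightness of the kernels is indispensable rather than cosmetic.
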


\begin{proof} Let us fix $t \in \NN$. We start by noticing
  \begin{align*}
    &\loc_{t+1}(v)+\scale_{t+1}(v) y  \\
    &= \loc(\loc_t(v)+\scale_t(v)x) + \scale(\scale_t(v)x+\loc_t(v))\,
    \left[ r^\loc_\tpo(v,x) +\left(1-r^\scale_\tpo(v,x)\right) \,
      \frac{y - \psi^\loc_\tpo(x)}{\psi^\scale_\tpo(x)} \right].
  \end{align*}
  Hence the left-hand side of (\ref{eq:for:induction:step}) can be
  rewritten as
  \begin{align*}
    &\int_\RR g(y) \pi(\loc_t(v) +\scale_{t}(v) x, \loc_{t+1}(v)+\scale_{t+1}(v) dy )\\
    &= \int_\RR
    g\left(\psi^\loc_\tpo(x)+\psi^\scale_\tpo(x)\frac{y-r^\loc_\tpo(v,x)}{1-r^\scale_\tpo(v,x)}\right)
    \pi(A_t(v,x), \loc(A_t(v,x)) + \scale(A_t(v,x))\, dy
    ))\\
    &= \int_{\RR} f_v(x,y) \,\pi_{v,x}(dy)
  \end{align*}
  if we abbreviate
  \begin{align*}
    A_t(v,x)&=\loc_t(v)+\scale_t(v)x,  \\
    \pi_x(dy)&=\pi(x,\loc(x)+\scale(x)\,dy),\\
    \pi_{v,x}(dy)&=\pi_{A_t(v,x)}(dy),\\
    f(x,y)&=g\left(\psi^\loc_\tpo(x)+\psi^\scale_\tpo(x)y\right),\\
    f_v(x,y)&=f\left(x,\frac{y-r^\loc_\tpo(v,x)}{1-r^\scale_\tpo(v,x)}\right),
  \end{align*}
  and we need to show that for compact $C \subset \RR$
  \begin{align*}
    \sup_{x \in C} \left\lvert \int_{\RR} f_v(x,y) \pi_{v,x}(dy) -
      \int_{\RR} f(x,y) K(dy) \right\rvert \rightarrow 0 \qquad
    \text{as } v \uparrow \infty.
  \end{align*}
  In particular it suffices to show the slightly more general
  statement that 
  \begin{align*}
    \sup_{c_1 \in C_1} \sup_{c_2 \in C_2} \left\lvert \int_{\RR}
      f_v(c_1,y) \pi_{v,c_2}(dy) - \int_{\RR} f(c_1,y) K(dy)
    \right\rvert \rightarrow 0 \qquad \text{as } v \uparrow \infty,
  \end{align*}
  for compact sets $C_1,C_2 \subset \RR$. Using the inequality
  \begin{align*}
    & \left\lvert \int_{\RR} f_v(c_1,y) \pi_{v,c_2}(dy) - \int_{\RR}
      f(c_1,y) K(dy)
    \right\rvert\\
    & \leq \int_{\RR} \left\lvert f_v(c_1,y) - f(c_1,y) \right\rvert
    \pi_{v,c_2}(dy) + \left\lvert \int_{\RR} f(c_1,y) \pi_{v,c_2}(dy)
      - \int_{\RR} f(c_1,y) K(dy) \right\rvert,
  \end{align*}
  the preceding statement will follow from the following two steps.\\
  \textbf{1st step} We show
  \begin{align*}
    \sup_{c_2 \in C_2} \int_{\RR} \left(\sup_{c_1 \in C_1} \left\lvert
        f_v(c_1,y) - f(c_1,y) \right\rvert\right)\, \pi_{v,c_2}(dy)
    \rightarrow 0 \qquad \text{as } v \uparrow \infty.
  \end{align*}
  Let $\eps>0$ and let $M$ be an upper bound for $g$, such that $2M$
  is an upper bound for $\lvert f_v-f\rvert$. Due to assumption
  \textbf{A1} and Lemma~\ref{lemma:tightness} there exists
  $L=L_{\eps,M} \in \RR$ and a compact set $C=C_{\eps,M} \subset \RR$,
  such that $\pi_\ell(C) > 1-\eps/(2M)$ for all $\ell \geq L$.
  Because of assumption \textbf{A2} (a) there exists $V=V_L \in \RR$
  such that $A_t(v,c_2) \geq A_t(v,\min(C_2)) \geq L$ for all $v \geq
  V$, $c_2 \in C_2$.  Hence
  \begin{align*}
    \pi_{v,c_2}(C)>1-\eps/(2M) \quad \text{ for all } v \geq V, \, c_2
    \in C_2.
  \end{align*}
  Moreover, by assumption \textbf{A2} (b) the map
  \begin{align*}
    \RR \times \RR \ni (x,y) \mapsto
    \psi^\loc_\tpo(x)+\psi^\scale_\tpo(x)\frac{y-r^\loc_\tpo(v,x)}{1-r^\scale_\tpo(v,x)}
    \in \RR
  \end{align*}
  converges uniformly on compact sets to the map
  \begin{align*}
    \RR \times \RR \ni (x,y) \mapsto
    \psi^\loc_\tpo(x)+\psi^\scale_\tpo(x)y \in \RR.
  \end{align*}
  Since the latter map is continuous by assumption \textbf{A2} (b) (in
  particular it maps compact sets to compact sets) and since $g$ is
  continuous, Lemma~\ref{lemma:composition:locunif:cvg} implies that
  \begin{align*}
    \sup_{y \in C} \varphi_v(y) \to 0 \quad \text{ for } \quad
    \varphi_v(y)=\sup_{c_1 \in C_1} \left\lvert f_v(c_1,y) - f(c_1,y)
    \right\rvert \quad \text{ as } \quad v \uparrow \infty.
  \end{align*}
  The hypothesis of the 1st step follows now from
  \begin{align*}
    \sup_{c_2 \in C_2} \int_{\RR} \varphi_v(y) \, \pi_{v,c_2}(dy)
    &\leq \sup_{c_2 \in C_2} \left(\int_{C} \varphi_v(y)\, \pi_{v,c_2}(dy)  + \int_{\RR \setminus C} \varphi_v(y) \,\pi_{v,c_2}(dy) \right)\\
    &\leq \sup_{y \in C} \varphi_{v}(y) \cdot 1 + 2M \cdot
    \varepsilon/(2M).
  \end{align*}
  \textbf{2nd step} We show
  \begin{align*}
    \sup_{c_2 \in C_2} \sup_{c_1 \in C_1} \left\lvert \int_{\RR}
      f(c_1,y) \pi_{v,c_2}(dy) - \int_{\RR} f(c_1,y) K(dy)
    \right\rvert \rightarrow 0 \qquad \text{as } v \uparrow \infty.
  \end{align*}
  Let $\eps >0$. Because of assumption \textbf{A1} and
  Lemma~\ref{lemma:scmapping} (ii) there exists $L=L_\eps\geq 0$, such
  that
  \begin{align*}
    \sup_{c_1 \in C_1} \left\lvert \int_{\RR} f(c_1,y) \pi_{\ell}(dy)
      - \int_{\RR} f(c_1,y) K(dy) \right\rvert <\eps \quad \text{ for
      all } \ell \geq L
  \end{align*}
  Because of assumption \textbf{A2} (a) there exists $V=V_L \in \RR$
  such that $A_t(v,c_2) \geq A_t(v,\min(C_2)) \geq L$ for all $v \geq
  V$, $c_2 \in C_2$. Hence, as desired,
  \begin{align*}
    \sup_{c_1 \in C_1} \left\lvert \int_{\RR} f(c_1,y) \pi_{v,c_2}(dy)
      - \int_{\RR} f(c_1,y) K(dy) \right\rvert <\eps \quad \text{ for
      all } v \geq V, \, c_2 \in C_2.
  \end{align*}
\end{proof}

\paragraph{Proof of Theorem~\ref{thm:tailchain}}

\begin{proof}[Proof of Theorem~\ref{thm:tailchain}]
  To simplify the notation, we abbreviate the affine transformations
  \begin{align*}
    v_u(y_0)=u+\sigma(u)y_0 \qquad \text{and} \qquad
    A_t(v,y)=\loc_t(v)+\scale_t(v)y, \qquad t=1,2,\dots
  \end{align*}
  henceforth. Considering the measures
  \begin{align*}
    &\mu^{(u)}_t(dy_0,\dots,dy_t)\\
    &= \pi(A_{t-1}(v_u(y_0),y_{t-1}),A_t(v_u(y_0),dy_t)) \dots \pi(v_u(y_0),A_{1}(v_u(y_0),dy_1)) \frac{F_0(v_u(dy_0))}{\overline{F}_0(u)},\\
    &\mu_t(dy_0,\dots,dy_t)\\
    &=K\left(\frac{dy_t-\psi^\loc_{t-1}(y_{t-1})}{\psi^\scale_{t-1}(y_{t-1})}\right)
    \dots
    K\left(\frac{dy_2-\psi^\loc_{1}(y_{1})}{\psi^\scale_{1}(y_{1})}\right)
    K(dy_1) H_0(dy_0),
  \end{align*}
  on $[0,\infty) \times \RR^t$, we may rewrite
  \begin{align*}
    &\EE \left[ f\left(\frac{X_0 - u}{\sigma(u)},\frac{X_1-\loc_1(X_0)}{\scale_1(X_0)},\dots,\frac{X_t-\loc_t(X_0)}{\scale_t(X_0)}\right) \,\bigg\vert\, X_0 > u\right]\\
    &= \int_{[0,\infty) \times \RR^t} f\left(y_0,y_1,\dots,y_t\right)
    \mu^{(u)}_t(dy_0,\dots,dy_t)
  \end{align*}
  and
  \begin{align*}
    \EE \left[ f\left(E_0,M_1,\dots,M_t\right) \right] =
    \int_{[0,\infty) \times \RR^t} f\left(y_0,y_1,\dots,y_t\right)
    \mu_t(dy_0,\dots,dy_t)
  \end{align*}
  for $f \in C_b([0,\infty)\times \RR^t)$. We need to show that
  $\mu^{(u)}_t(dy_0,\dots,dy_t)$ converges weakly to
  $\mu_t(dy_0,\dots,dy_t)$. The proof is by induction on $t$. 

  For $t=1$ it suffices to show that for $f_0 \in C_b([0,\infty))$ and
  $g \in C_b(\RR)$
  \begin{align}\label{eq:induction:start:lhs}
    \notag&\int_{[0,\infty)\times \RR} f_0(y_0) g(y_1) \mu^{(u)}_1(dy_0,dy_1)\\
    &=\int_{[0,\infty)} f_0(y_0) \left[\int_\RR g(y_1)
      \pi(v_u(y_0),A_1(v_u(y_0),dy_1)) \right]
    \frac{F_0(v_u(dy_0))}{\overline{F}_0(u)}
  \end{align}
  converges to $\int_{[0,\infty)\times \RR} f_0(y_0) g(y_1)
  \mu_1(dy_0,dy_1)=\EE(f_0(E_0)) \EE(g(M_1))$. The term in the inner
  brackets $[ \dots ]$ is bounded and, by assumption \textbf{A1}, it
  converges to $\EE(g(M_1))$ for $u \uparrow \infty$, since $v_u(y_0)
  \to \infty$ for $u \uparrow \infty$. The convergence holds even
  uniformly in the variable $y_0 \in [0,\infty)$, since $\sigma(u)>0$.
  Therefore, Lemma~\ref{lemma:scmapping} (i) applies, which guarantees
  convergence of the entire term (\ref{eq:induction:start:lhs}) to
  $\EE(f_0(E_0)) \EE(g(M_1))$ with regard to assumption \fz.
   
  Now, let us assume, the statement is proved for some $t \in \NN$.
  It suffices to show that for $f_0 \in C_b([0,\infty)\times \RR^t)$,
  $g\in C_b(\RR)$
  \begin{align}\label{eq:induction:step}
    \notag &\int_{[0,\infty)\times \RR^{t+1}} f_0(y_0,y_1,\dots,y_t) g(y_{t+1}) \mu^{(u)}_{t+1}(dy_0,dy_1,\dots,dy_t,dy_{t+1})\\
    \notag &=\int_{[0,\infty)\times \RR^{t}} f_0(y_0,y_1,\dots,y_t) \left[\int_\RR g(y_{t+1}) \pi(A_{t}(v_u(y_0),y_t),A_{t+1}(v_u(y_0),dy_{t+1})) \right]\\
    &\hspace{9.5cm}\mu^{(u)}_{t}(dy_0,dy_1,\dots,dy_t)
  \end{align}
  converges to
  \begin{align}\label{eq:induction:step:limit}
    \notag&\int_{[0,\infty)\times \RR^{t+1}} f_0(y_0,y_1,\dots,y_t) g(y_{t+1}) \mu_{t+1}(dy_0,dy_1,\dots,dy_t,dy_{t+1})\\
    \notag &=\int_{[0,\infty)\times \RR^{t}} f_0(y_0,y_1,\dots,y_t) \left[\int_\RR g(y_{t+1}) K\left(\frac{dy_{t+1}-\psi_\tpo^\loc(y_t)}{\psi_\tpo^\scale(y_t)}\right) \right]\\
    &\hspace{9.5cm} \mu_{t}(dy_0,dy_1,\dots,dy_t).
  \end{align}
  The term in square brackets of (\ref{eq:induction:step}) is bounded
  and, by Lemma~\ref{lemma:for:induction:step} and assumptions
  \textbf{A1} and \textbf{A2}, it converges uniformly on compact sets
  in the variable $y_{t}$ to the continuous function $\int_{\RR}
  g(\psi^\loc_\tpo(y_t)+\psi_\tpo^\scale(y_t) y_{t+1}) K(dy_{t+1})$
  (the term in square brackets of
  (\ref{eq:induction:step:limit})). This convergence holds uniformly
  on compact sets in both variables $(y_0,y_t) \in [0,\infty)\times
  \RR$ jointly, since $\sigma(u)>0$. Hence, the induction hypothesis
  and Lemma~\ref{lemma:scmapping} (i) imply the desired result.
\end{proof}

\begin{remark}\label{rk:tailchain:relaxed:proof}
  Under the relaxed assumptions of Remark~\ref{rk:tailchain:relaxed},
  the proof of Theorem~\ref{thm:tailchain} can be modified by
  replacing the integration area $\RR^t$ by $\overline{S}_1 \times
  \dots \times \overline{S}_t$ and by letting $x$ vary in
  $\overline{S}_t$ and $y \in \overline{S}_{t+1}$ in
  Lemma~\ref{lemma:for:induction:step}.
\end{remark}

The following lemma is a straightforward analogue to
Lemma~\ref{lemma:for:induction:step} and prepares the induction step
for the proof of Theorem~\ref{thm:tailchain:nonneg}. We omit its
proof, since the only changes compared to the proof of
Lemma~\ref{lemma:for:induction:step} are the removal of the location
normings and the fact that $x$ varies in $[\delta,\infty)$ instead of
$\RR$ and $y$ in $[0,\infty)$ instead of $\RR$.

\begin{lemma}
  \label{lemma:for:induction:step:nonneg}
  Let $\{X_t \,:\, t = 0,1,2, \dots \}$ be a non-negative homogeneous
  Markov chain satisfying assumptions \textbf{B1} and \textbf{B2} (a)
  and (b).  Let $g \in C_b([0,\infty))$. Then, as $v \uparrow \infty$,
  \begin{align}\label{eq:for:induction:step:nonneg}
    \int_{[0,\infty)} g(y) \pi(\scale_{t}(v) x,\scale_{t+1}(v) dy )
    \rightarrow \int_{[0,\infty)} g(\psi^\scale_\tpo(x)y) K(dy)
  \end{align}
  for $t=1,2,\dots$ and the convergence holds uniformly on compact
  sets in the variable $x \in [\delta,\infty)$ for any $\delta >0$.
\end{lemma}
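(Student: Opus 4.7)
The plan is to mirror the proof of Lemma~\ref{lemma:for:induction:step}, omitting all location-related terms and restricting the argument $x$ to $[\delta,\infty)$ instead of $\RR$. Fix $t$ and a compact set $C\subset[\delta,\infty)$. Using the identity
\[
1-r^\scale_t(v,x) = \frac{\scale_{t+1}(v)\,\psi^\scale_t(x)}{\scale(\scale_t(v)\,x)}
\]
from assumption~\textbf{B2}(b), I would perform the change of variables $\scale_{t+1}(v)\,y = \scale(\scale_t(v)\,x)\,u$ to rewrite
\[
\int_{[0,\infty)} g(y)\,\pi(\scale_t(v) x,\scale_{t+1}(v)\,dy) = \int_{[0,\infty)} g\!\left(\frac{\psi^\scale_t(x)}{1-r^\scale_t(v,x)}\,u\right) \pi_{v,x}(du),
\]
where $\pi_{v,x}(du) := \pi(\scale_t(v)x,\scale(\scale_t(v) x)\,du)$. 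This brings the kernel into the form covered directly by assumption~\textbf{B1}.

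Next, I would split the target difference via the triangle inequality: writing $f_v(x,u) = g(\psi^\scale_t(x)u/(1-r^\scale_t(v,x)))$ and $f(x,u) = g(\psi^\scale_t(x)u)$, the error is bounded by
\[
\int_{[0,\infty)} |f_v(x,u)-f(x,u)|\,\pi_{v,x}(du) + \left|\int_{[0,\infty)} f(x,u)\,[\pi_{v,x}(du) - K(du)]\right|,
\]
and I would treat the two pieces as in the 1st and 2nd steps of the proof of Lemma~\ref{lemma:for:induction:step}, uniformly over $x \in C$. For the first piece, assumption~\textbf{B2}(b) gives $r^\scale_t(v,x)\to 0$ uniformly on $C$, so $\psi^\scale_t(x)u/(1-r^\scale_t(v,x)) \to \psi^\scale_t(x)u$ uniformly on compact subsets of $C \times [0,\infty)$; by continuity of $g$ and Lemma~\ref{lemma:composition:locunif:cvg}, this transfers to uniform convergence of the integrand on such compacts, and tightness of the family $\{\pi_{v,x}\}$ (from assumption~\textbf{B1} via Lemma~\ref{lemma:tightness}) allows confinement to a common compact subset of $[0,\infty)$ outside which the integrand is controlled by $2\|g\|_\infty$ times an arbitrarily small tail mass. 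For the second piece, the family $\pi(\ell,\scale(\ell)\,d\cdot)$ converges weakly to $K$ as $\ell \uparrow \infty$ by~\textbf{B1}, and since assumption~\textbf{B2}(a) forces $\scale_t(v)x\to\infty$ uniformly for $x\in C$, Lemma~\ref{lemma:scmapping}(ii) yields the required uniform convergence in $x$.

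The only nontrivial point, and the reason the lemma is formulated for $x\in[\delta,\infty)$ rather than $[0,\infty)$, is that $\scale_t(v)\cdot 0 = 0$ regardless of $v$, so uniform divergence of the base points $\scale_t(v)x$ to $\infty$ is available only away from $x=0$. The bound $x\geq\delta$ also keeps $\psi^\scale_t(x)$ bounded above and away from $0$ on $C$ by continuity, which makes the change of variables harmless (assumption~\textbf{B2}(c) is not needed here; it becomes relevant only when controlling boundary mass of the tail chain in the subsequent proof of Theorem~\ref{thm:tailchain:nonneg}). With this restriction in place, the remaining ingredients are exactly those used for Lemma~\ref{lemma:for:induction:step}, minus the location pieces.
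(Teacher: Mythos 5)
Your proposal is correct and follows essentially the same route as the paper, which explicitly omits this proof on the grounds that it is the proof of Lemma~\ref{lemma:for:induction:step} with the location normings removed and with $x$ restricted to $[\delta,\infty)$ and $y$ to $[0,\infty)$ --- precisely the adaptation you carry out, including the two-step decomposition via Lemmas~\ref{lemma:tightness}, \ref{lemma:composition:locunif:cvg} and \ref{lemma:scmapping}. Your observations that the restriction $x\geq\delta$ is what secures $\scale_t(v)x\to\infty$ uniformly, and that assumption \textbf{B2}(c) is not needed at this stage, are both accurate.
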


\paragraph{Proof of Theorem~\ref{thm:tailchain:nonneg}}

Even though parts of the following proof resemble the proof of
Theorem~\ref{thm:tailchain}, one has to control the mass at 0 of the
limiting measures in this setting. Therefore, a second induction
hypothesis (II) enters the proof.

\begin{proof}[Proof of Theorem~\ref{thm:tailchain:nonneg}]
  To simplify the notation, we abbreviate the affine transformation
  $v_u(y_0)=u+\sigma(u)y_0$
  henceforth. Considering the measures
  \begin{align}
    \notag &\mu^{(u)}_t(dy_0,\dots,dy_t)\\
    \label{eq:firstmeasure:nonneg} &= \pi(b_{t-1}(v_u(y_0))y_{t-1},b_t(v_u(y_0))dy_t) \dots \pi(v_u(y_0),b_{1}(v_u(y_0))dy_1) \frac{F_0(v_u(dy_0))}{\overline{F}_0(u)},\\
    \notag &\mu_t(dy_0,\dots,dy_t)\\
    \label{eq:secondmeasure:nonneg} &=
    K\left(\frac{dy_t}{\psi^\scale_{t-1}(y_{t-1})}\right) \dots
    K\left(\frac{dy_2}{\psi^\scale_{1}(y_{1})}\right) K(dy_1)
    H_0(dy_0),
  \end{align}
  on $[0,\infty) \times [0,\infty)^t$, we may rewrite
  \begin{align*}
    &\EE \left[ f\left(\frac{X_0 - u}{\sigma(u)},\frac{X_1}{\scale_1(X_0)},\dots,\frac{X_t}{\scale_t(X_0)}\right) \,\bigg\vert\, X_0 > u\right]\\
    &= \int_{[0,\infty) \times [0,\infty)^t}
    f\left(y_0,y_1,\dots,y_t\right) \mu^{(u)}_t(dy_0,\dots,dy_t)
  \end{align*}
  and
  \begin{align*}
    \EE \left[ f\left(E_0,M_1,\dots,M_t\right) \right] =
    \int_{[0,\infty) \times [0,\infty)^t}
    f\left(y_0,y_1,\dots,y_t\right) \mu_t(dy_0,\dots,dy_t)
  \end{align*}
  for $f \in C_b([0,\infty)\times [0,\infty)^t)$.  In particular note
  that $b_j(0)$, $j=1,\dots,t$ need not be defined in
  (\ref{eq:firstmeasure:nonneg}), since $v_u(y_0)\geq u>0$ for
  $y_0\geq 0$ and sufficiently large $u$, whereas
  (\ref{eq:secondmeasure:nonneg}) is well-defined, since $K$ puts no
  mass to $0 \in [0,\infty)$.  Formally, we may set
  $\psi^\scale_{j}(0)=1$, $j=1,\dots,t$ in order to emphasize that we
  consider measures on $[0,\infty)^{t+1}$ here (instead of
  $[0,\infty)\times (0,\infty)^t$).  To prove the theorem, we need to
  show that $\mu^{(u)}_t(dy_0,\dots,dy_t)$ converges weakly to
  $\mu_t(dy_0,\dots,dy_t)$.  The proof is by induction on $t$.  In
  fact, we show two statements ((I) and (II)) by induction on $t$:
  \begin{enumerate}[label={(\Roman*)}]
  \item $\mu^{(u)}_t(dy_0,\dots,dy_t)$ converges weakly to
    $\mu_t(dy_0,\dots,dy_t)$ as $u \uparrow \infty$.
  \item For all $\varepsilon>0$ there exists $\delta_t>0$ such that
    $\mu_t([0,\infty)\times [0,\infty)^{t-1} \times
    [0,\delta_t])<\eps$.
  \end{enumerate}

  \noindent \underline{(I) for $t=1$:} It suffices to show that for
  $f_0 \in C_b([0,\infty))$ and $g \in C_b([0,\infty))$
  \begin{align}\label{eq:induction:start:lhs:nonneg}
    \notag&\int_{[0,\infty)\times [0,\infty)} f_0(y_0) g(y_1) \mu^{(u)}_1(dy_0,dy_1)\\
    &=\int_{[0,\infty)} f_0(y_0) \left[\int_{[0,\infty)} g(y_1)
      \pi(v_u(y_0),b_1(v_u(y_0))dy_1) \right]
    \frac{F_0(v_u(dy_0))}{\overline{F}_0(u)}
  \end{align}
  converges to $\int_{[0,\infty)\times [0,\infty)} f_0(y_0) g(y_1)
  \mu_1(dy_0,dy_1)=\EE(f_0(E_0)) \EE(g(M_1)).$ The term in the inner
  brackets $[\dots]$ is bounded and, by assumption \textbf{B1}, it
  converges to $\EE(g(M_1))$ for $u \uparrow \infty$, since $v_u(y_0)
  \to \infty$ for $u \uparrow \infty$. The convergence holds even
  uniformly in the variable $y_0 \in [0,\infty)$, since $\sigma(u)>0$.
  Therefore, Lemma~\ref{lemma:scmapping} (i) applies, which guarantees
  convergence of the entire term (\ref{eq:induction:start:lhs:nonneg})
  to $\EE(f_0(E_0)) \EE(g(M_1))$ with regard to assumption
  $\mathbf{F_0}$.

  \noindent \underline{(II) for $t=1$:}
  Note that $K(\{0\})=0$. Hence, there exists $\delta>0$ such that $K([0,\delta])<\eps$, which immediately entails $\mu_1([0,\infty)\times[0,\delta])=H_0([0,\infty))K([0,\delta])<\delta$.

  \noindent Now, let us assume that both statements ((I) and (II)) are
  proved for some $t \in \NN$.  

  \noindent \underline{(I) for $t+1$:} It suffices to show that for
  $f_0 \in C_b([0,\infty)\times [0,\infty)^t)$, $g\in C_b([0,\infty))$
  \begin{align}\label{eq:induction:step:nonneg}
    \notag &\int_{[0,\infty)\times [0,\infty)^{t+1}} f_0(y_0,y_1,\dots,y_t) g(y_{t+1}) \mu^{(u)}_{t+1}(dy_0,dy_1,\dots,dy_t,dy_{t+1})\\
    \notag &=\int_{[0,\infty)\times [0,\infty)^{t}} f_0(y_0,y_1,\dots,y_t) \left[\int_{[0,\infty)} g(y_{t+1}) \pi(b_{t}(v_u(y_0))y_t,b_{t+1}(v_u(y_0))dy_{t+1}) \right]\\
    &\hspace{9.5cm}\mu^{(u)}_{t}(dy_0,dy_1,\dots,dy_t)
  \end{align}
  converges to
  \begin{align}\label{eq:induction:step:limit:nonneg}
    \notag&\int_{[0,\infty)\times [0,\infty)^{t+1}} f_0(y_0,y_1,\dots,y_t) g(y_{t+1}) \mu_{t+1}(dy_0,dy_1,\dots,dy_t,dy_{t+1})\\
    \notag &=\int_{[0,\infty)\times [0,\infty)^{t}} f_0(y_0,y_1,\dots,y_t) \left[\int_{[0,\infty)}g(y_{t+1}) K\left(dy_{t+1}/\psi_\tpo^\scale(y_t)\right) \right]\\
    &\hspace{9.5cm} \mu_{t}(dy_0,dy_1,\dots,dy_t).
  \end{align}

  From Lemma~\ref{lemma:for:induction:step:nonneg} and assumptions
  \textbf{B1} and \textbf{B2} (a) and (b) we know that, for any
  $\delta>0$, the (bounded) term in the brackets $[\dots]$ of
  (\ref{eq:induction:step:nonneg}) converges uniformly on compact sets
  in the variable $y_{t} \in [\delta,\infty)$ to the continuous
  function $\int_{[0,\infty)} g(\psi_\tpo^\scale(y_t) y_{t+1})
  K(dy_{t+1})$ (the term in the brackets $[\dots]$ of
  (\ref{eq:induction:step:limit:nonneg})). This convergence holds even
  uniformly on compact sets in both variables $(y_0,y_t) \in
  [0,\infty)\times [\delta,\infty)$ jointly, since $\sigma(u)>0$.
  Hence, the induction hypothesis (I) and Lemma~\ref{lemma:scmapping}
  (i) imply that for any $\delta>0$ the integral in
  (\ref{eq:induction:step:nonneg}) converges to the integral in
  (\ref{eq:induction:step:limit:nonneg}) if the integrals with respect
  to $\mu_t$ and $\mu_t^{(u)}$ were restricted to
  $A_\delta:=[0,\infty)\times [0,\infty)^{t-1} \times [\delta,\infty)$
  (instead of integration over $[0,\infty)\times [0,\infty)^{t-1}
  \times [0,\infty)$).
                
  Therefore (and since $f_0$ and $g$ are bounded) it suffices to
  control the mass of $\mu_t$ and $\mu_t^{(u)}$ on the complement
  $A_\delta^c=[0,\infty)\times [0,\infty)^{t-1} \times [0,\delta)$.
  We show that for some prescribed $\eps>0$ it is possible to find
  some sufficiently small $\delta>0$ and sufficiently large $u$, such
  that $\mu_t(A_\delta^c)<\eps$ and $\mu^{(u)}_t(A^c_\delta)<2\eps$.
  Because of the induction hypothesis (II), we have indeed
  $\mu_t(A_{\delta_t})<\eps$ for some $\delta_t>0$.  Choose
  $\delta=\delta_t/2$ and note that the sets of the form $A_\delta$
  are nested. Let $C_\delta$ be a continuity set of $\mu_t$ with
  $A^c_\delta \subset C_\delta \subset A^c_{2\delta}$.
  Then the value of $\mu_t$ on all three sets $A^c_\delta,C_\delta,A^c_{2\delta}$ is smaller than $\eps$ and because of the induction hypothesis (I), the value $\mu^{(u)}_t(C_\delta)$ converges to $\mu_t(C_\delta)<\eps$. Hence, for sufficiently large $u$, we also have $\mu^{(u)}_t(A^c_\delta)<\mu^{(u)}_{t}(C_\delta)<\mu_t(C_\delta)+\eps<2\eps$, as desired.

  \noindent \underline{(II) for $t+1$:} We have for any $\delta>0$ and
  any $c>0$
  \begin{align*}
    &\mu_{t+1}([0,\infty)\times[0,\infty)^t \times [0,\delta]) =
    \int_{[0,\infty)\times[0,\infty)^t}
    K\left(\left[0,\delta/\psi_\tpo^\scale(y_t)\right]\right)
    \mu_t(dy_0,\dots,dy_t).
  \end{align*}
  Splitting the integral according to $\{\psi_\tpo^\scale(y_t)>c\}$ or
  $\{\psi_\tpo^\scale(y_t)\leq c\}$ yields
  \begin{align*}
    &\mu_{t+1}([0,\infty)\times[0,\infty)^t \times [0,\delta]) \leq
    K\left(\left[0,\delta/c\right]\right) + \mu_t( [0,\infty)\times
    [0,\infty)^{t-1}\times (\psi_\tpo^\scale)^{-1}([0,c])\}).
  \end{align*}
  By assumption \textbf{B2} (c) and the induction hypothesis (II) we
  may choose $c>0$ sufficiently small, such that the second summand
  $\mu_t([0,\infty)\times [0,\infty)^{t-1}\times
  (\psi_\tpo^\scale)^{-1}([0,c])\})$ is smaller than
  $\eps/2$. Secondly, since $K(\{0\})=0$, it is possible to choose
  $\delta_{t+1}=\delta>0$, such that the first
  summand $K\left(\left[0,\frac{\delta}{c}\right]\right)$ is smaller
  than $\eps/2$, which shows (II) for $t+1$.
\end{proof}

\subsection{Auxiliary arguments}

The following lemma is a slight modification of Lemma~6.1.\ of
\cite{kulisoul}.  In the first part (i), we only assume the functions
$\varphi_n$ are measurable (and not necessarily continuous), whereas
we require the limiting function $\varphi$ to be continuous. Since its
proof is almost verbatim the same as in \cite{kulisoul}, we refrain
from representing it here.  The second part (ii) is a direct
consequence of Lemma~6.1.\ of \cite{kulisoul}, cf.\ also
\cite{bill99}, p.~17, Problem~8.

\begin{lemma}
  \label{lemma:scmapping}
  Let $(E,d)$ be a complete locally compact separable metric space and
  $\mu_n$ be a sequence of probability measures which converges weakly
  to a probability measure $\mu$ on $E$.
  \begin{enumerate}[label={(\roman*)}]
  \item Let $\varphi_n$ be a uniformly bounded sequence of measurable
    functions which converges uniformly on compact sets of $E$ to a
    continuous function $\varphi$.  Then $\varphi$ is bounded on $E$
    and $\lim_{n \to \infty} \mu_n(\varphi_n) \rightarrow
    \mu(\varphi)$.
  \item Let $F$ be a topological space. If $\varphi \in C_b(F \times
    E)$, then the sequence of functions $F \ni x \mapsto \int_E
    \varphi(x,y) \mu_n(dy) \in \RR$ converges uniformly on compact
    sets of $F$ to the (necessarily continuous) function $F \ni x
    \mapsto \int_E \varphi(x,y) \mu(dy) \in \RR$.
  \end{enumerate}
\end{lemma}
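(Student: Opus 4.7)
The plan is to prove part (i) directly by a tightness argument and then deduce part (ii) from (i) via a subsequence reduction that exploits the joint continuity of $\varphi$.

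For part (i), I would first observe that $\varphi$ inherits the uniform bound $M$ from the $\varphi_n$: uniform convergence on compact sets implies pointwise convergence on $E$, and pointwise convergence preserves the bound $\|\varphi_n\|_\infty \leq M$. To show $\mu_n(\varphi_n) \to \mu(\varphi)$, I would split
\[
  |\mu_n(\varphi_n) - \mu(\varphi)| \leq |\mu_n(\varphi_n) - \mu_n(\varphi)| + |\mu_n(\varphi) - \mu(\varphi)|.
\]
Weak convergence of $\mu_n$ to $\mu$ on the Polish space $E$ yields tightness of $\{\mu_n\}$ by Prohorov's theorem; combined with inner regularity of $\mu$, for any $\eps>0$ there is a compact $K \subset E$ with $\sup_n \mu_n(K^c) < \eps$ and $\mu(K^c)<\eps$. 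On $K$, uniform convergence gives
\[
  |\mu_n(\varphi_n)-\mu_n(\varphi)| \leq \sup_{y \in K}|\varphi_n(y)-\varphi(y)| + 2M\eps,
\]
and the first summand tends to $0$. Since $\varphi$ is bounded and continuous, weak convergence delivers $\mu_n(\varphi) \to \mu(\varphi)$ for the second term. Letting $\eps \downarrow 0$ concludes the argument.

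For part (ii), I would argue by contradiction. Fix a compact $C \subset F$ and suppose uniform convergence fails: then there exist $\eps>0$ and points $x_n \in C$ along some subsequence with
\[
  \Bigl\lvert \textstyle\int_E \varphi(x_n,y)\,\mu_n(dy) - \int_E \varphi(x_n,y)\,\mu(dy)\Bigr\rvert \geq \eps.
\]
Compactness of $C$ lets me extract a further subsequence along which $x_n \to x^* \in C$. Joint continuity and boundedness of $\varphi$ on $F \times E$ then give two facts: first, uniform continuity of $\varphi$ on products of compacts yields $\varphi(x_n,\cdot) \to \varphi(x^*,\cdot)$ uniformly on every compact subset of $E$; second, bounded convergence yields $\int_E \varphi(x_n,y)\,\mu(dy) \to \int_E \varphi(x^*,y)\,\mu(dy)$. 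Applying part (i) with $\varphi_n(y) := \varphi(x_n,y)$ and limit $\varphi(x^*,\cdot)$ produces $\int_E \varphi(x_n,y)\,\mu_n(dy) \to \int_E \varphi(x^*,y)\,\mu(dy)$. Both terms in the supposed gap thus converge to the same limit, a contradiction.

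The main subtlety lies in part (i): without Prohorov tightness one cannot honestly split $E$ into a compact piece (on which the approximation $\varphi_n \to \varphi$ is effective) and a negligible tail (controlled only by the uniform bound $M$). Once this tightness-plus-splitting is in place, part (ii) reduces cleanly via the subsequence device and joint continuity; the only small care needed there is to ensure that uniform continuity on products of compacts $\{x_n, x^*\} \times K_E$ yields the uniform-on-compacts convergence required as input for part (i).
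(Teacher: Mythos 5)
Your part (i) is correct and is essentially the argument the paper has in mind: the paper does not reproduce a proof but defers to Lemma~6.1 of Kulik and Soulier, whose proof is exactly your tightness-plus-splitting scheme (Prohorov's theorem on the Polish space $E$ to get a compact $K$ carrying all but $\varepsilon$ of every $\mu_n$, uniform convergence of $\varphi_n$ to $\varphi$ on $K$, the uniform bound $2M$ off $K$, and plain weak convergence for the remaining term $\mu_n(\varphi)-\mu(\varphi)$). Deducing boundedness of $\varphi$ from pointwise limits of the uniform bound is also fine, since singletons are compact.

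For part (ii) you take a genuinely different route from the one the paper points to. The citation to Billingsley (p.~17, Problem~8) suggests deducing (ii) from the fact that $\mu_n \wk \mu$ forces $\int f\,d\mu_n \to \int f\,d\mu$ uniformly over uniformly bounded equicontinuous families, applied to $\{\varphi(x,\cdot) : x \in C\}$; you instead reduce (ii) to (i) by contradiction and a subsequence extraction, which is more self-contained and makes the dependence on (i) explicit. The one gap as written: $F$ is only assumed to be a topological space, and a compact subset $C$ of a general topological space need not be sequentially compact, so you cannot in general extract a convergent subsequence from $(x_n) \subset C$; likewise ``uniform continuity on products of compacts'' needs the canonical uniformity of a compact Hausdorff set rather than a metric when $F$ is not metrizable. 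In every application in the paper $F$ is a subset of a Euclidean space, so this costs nothing there; to cover the stated generality you would either pass to nets, or observe directly that for compact $K \subset E$ the map $x \mapsto \sup_{y \in K}\lvert\varphi(x,y)-\varphi(x^*,y)\rvert$ is continuous and vanishes at $x^*$, which supplies the uniform-on-compacts input to part (i) without any sequential compactness.
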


\begin{lemma}\label{lemma:tightness}
  Let $(E,d)$ be a complete locally compact separable metric space.
  Let $\mu$ be a probability measure and $(\mu_x)_{x \in \RR}$ a
  family of probability measures on $E$, such that every subsequence
  $\mu_{x_n}$ with $x_n\to \infty$ converges weakly to $\mu$. Then,
  for any $\eps>0$, there exists $L \in \RR$ and a compact set $C
  \subset E$, such that $\mu_\ell(C)>1-\eps$ for all $\ell \geq L$.
\end{lemma}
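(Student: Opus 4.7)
The plan is to argue by contradiction and reduce uniform tightness to the weak convergence of a single cleverly chosen subsequence. Suppose the conclusion fails, so there exists $\eps>0$ such that for every $L \in \RR$ and every compact $C \subset E$ one can find $\ell \geq L$ with $\mu_\ell(C) \leq 1-\eps$.

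First I would exploit the topology of $E$. Since $E$ is locally compact, separable and metrizable, it is $\sigma$-compact, and one can construct a sequence of open sets $(U_n)_{n \in \NN}$ with compact closures $K_n = \overline{U_n}$ satisfying $K_n \subset U_{n+1}$ and $\bigcup_n U_n = E$. This is the standard exhaustion by relatively compact open sets. In particular $\mu(K_n) \uparrow 1$, so one may fix $N$ large enough that $\mu(K_N) > 1 - \eps/2$.

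Next I would use the negation of the conclusion to produce a bad subsequence. Applied to $L=n$ and $C=K_n$, it yields $\ell_n \geq n$ with $\mu_{\ell_n}(K_n) \leq 1-\eps$. Clearly $\ell_n \to \infty$, so by hypothesis $\mu_{\ell_n} \wk \mu$. For $n \geq N+1$, monotonicity gives $U_{N+1} \subseteq K_{N+1} \subseteq K_n$, hence $\mu_{\ell_n}(U_{N+1}) \leq \mu_{\ell_n}(K_n) \leq 1-\eps$. On the other hand, the Portmanteau theorem (open set version) applied to the open set $U_{N+1} \supseteq K_N$ yields
\begin{equation*}
\liminf_{n \to \infty} \mu_{\ell_n}(U_{N+1}) \geq \mu(U_{N+1}) \geq \mu(K_N) > 1 - \eps/2,
\end{equation*}
which contradicts $\mu_{\ell_n}(U_{N+1}) \leq 1-\eps$. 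This contradiction completes the argument.

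The only delicate point is passing from the pointwise contradiction statement (one $\ell$ per compact $C$) to a single sequence $\ell_n \to \infty$ whose measures fail to concentrate on a common set; the exhausting sequence $K_n$ together with the trick of applying Portmanteau on the slightly larger open set $U_{N+1}$ to compare with $\mu(K_N)$ handles this cleanly. Local compactness is used precisely to guarantee that the exhausting compact sets $K_n$ can be sandwiched by open sets, so that Portmanteau's open-set inequality can be invoked in the right direction.
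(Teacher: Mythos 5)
Your proof is correct, and it shares the paper's overall skeleton (argue by contradiction, exhaust $E$ by a nested sequence of compacts, extract a divergent sequence $\ell_n \geq n$ with $\mu_{\ell_n}(K_n) \leq 1-\eps$, then invoke the hypothesis that $\mu_{\ell_n} \wk \mu$), but it closes the argument by a genuinely different mechanism. The paper finishes by observing that the weakly convergent sequence $\{\mu_{\ell_n}\}$ is uniformly tight --- this is the Prokhorov/Ulam direction, which is where completeness of $E$ enters --- so there is a single compact $C$ with $\mu_{\ell_n}(C) > 1-\eps$ for all $n$; since $C \subset K_{n^*}$ for some $n^*$ (a property the paper builds into its exhaustion), this contradicts $\mu_{\ell_{n^*}}(K_{n^*}) \leq 1-\eps$. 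You instead sandwich the compacts by open sets, fix $N$ with $\mu(K_N) > 1-\eps/2$, and apply the Portmanteau lower bound $\liminf_n \mu_{\ell_n}(U_{N+1}) \geq \mu(U_{N+1}) \geq \mu(K_N)$ against the upper bound $\mu_{\ell_n}(U_{N+1}) \leq \mu_{\ell_n}(K_n) \leq 1-\eps$. Your route is more elementary --- it uses only the open-set form of Portmanteau rather than tightness of convergent sequences --- and, as you note, it makes completeness of $(E,d)$ superfluous: local compactness, separability and metrizability suffice to build the exhaustion with $K_n \subset U_{n+1}$, and that is all your argument needs. The paper's version is shorter to state given that it is willing to cite Prokhorov, but yours proves a marginally stronger statement with lighter machinery. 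Both are valid; no gaps.
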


\begin{proof} First note that the topological assumptions on $E$ imply
  that there exists a sequence of nested compact sets $K_1 \subset K_2
  \subset K_3 \subset \dots$, such that $\bigcup_{n \in \NN} K_n=E$
  and each compact subset $K$ of $E$ is contained in some $K_n$.
  
  Now assume that there exists $\delta>0$ such that for all $L\in \RR$
  and for all compact $C \subset E$ there exists an $\ell \geq L$ such
  that $\mu_{\ell}(C)\leq 1-\delta$. It follows that for all $n \in
  \NN$, there exists an $x_n \geq n$, such that $\mu_{x_n}(K_n)\leq
  1-\delta$. Apparently $x_n \uparrow \infty$ as $n \to \infty$. Hence
  $\mu_{x_n}$ converges weakly to $\mu$ and the set of measures
  $\{\mu_{x_n}\}_{n \in \NN}$ is tight, since $E$ was supposed to be
  complete separable metric. Therefore, there exists a compact set $C$
  such that $\mu_{x_n}(C) > 1-\delta$ for all $n \in \NN$. Since $C$
  is necessarily contained in some $K_{n^*}$ for some $n^* \in \NN$,
  the latter contradicts $\mu_{x_{n^*}}(K_{n^*})\leq 1-\delta$.
\end{proof}

\begin{lemma}\label{lemma:composition:locunif:cvg}
  Let $(E,\tau)$ be a topological space and $(F,d)$ a locally compact
  metric space. Let $\varphi_n:E \rightarrow F$ be a sequence of maps
  which converges uniformly on compact sets to a map $\varphi:E
  \rightarrow F$, which satisfies the property that $\varphi(C)$ is
  relatively compact for any compact $C \subset E$.  Then, for any
  continuous $g:F \rightarrow \RR$, the sequence of maps $g \circ
  \varphi_n$ will converge uniformly on compact sets to $g \circ
  \varphi$.
\end{lemma}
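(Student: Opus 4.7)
The plan is to reduce the uniform convergence statement to the uniform continuity of $g$ on a suitable compact enlargement of $\overline{\varphi(C)}$, where $C \subset E$ is a fixed compact set.

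First I would fix a compact $C \subset E$ and set $K_0 = \overline{\varphi(C)}$, which is compact by the hypothesis on $\varphi$. Using local compactness of $F$ together with compactness of $K_0$, I would construct an open set $U \subset F$ with $K_0 \subset U$ and $\overline{U}$ compact; concretely, cover $K_0$ by finitely many open sets $V_i$ with compact closure, set $U = \bigcup V_i$, and then $\overline{U} = \bigcup \overline{V_i}$ is compact. Since $K_0$ is compact and $F \setminus U$ is closed and disjoint from $K_0$, the distance $\delta := d(K_0, F\setminus U) > 0$.

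Next, I would invoke uniform convergence of $\varphi_n$ to $\varphi$ on $C$: for any $\eta > 0$ there exists $N_\eta$ such that $\sup_{x \in C} d(\varphi_n(x), \varphi(x)) < \eta$ for all $n \geq N_\eta$. Taking $\eta = \delta$ forces $\varphi_n(x) \in U \subset \overline{U}$ for all $x \in C$ and $n \geq N_\delta$. Since $g$ is continuous on the compact metric space $\overline{U}$, it is uniformly continuous there, so given $\varepsilon > 0$ there exists $\eta_\varepsilon \in (0,\delta]$ such that $y, y' \in \overline{U}$ with $d(y, y') < \eta_\varepsilon$ implies $\lvert g(y) - g(y') \rvert < \varepsilon$. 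Choosing $n \geq N_{\eta_\varepsilon}$, I would then have, for every $x \in C$, both $\varphi_n(x), \varphi(x) \in \overline{U}$ and $d(\varphi_n(x), \varphi(x)) < \eta_\varepsilon$, whence $\lvert g(\varphi_n(x)) - g(\varphi(x)) \rvert < \varepsilon$. This yields $\sup_{x \in C} \lvert g(\varphi_n(x)) - g(\varphi(x)) \rvert \to 0$, which is the claim.

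The only delicate step is the construction of the open $U$ with compact closure around $K_0$; this is exactly where local compactness of $F$ is used and where the hypothesis that $\varphi(C)$ is relatively compact is essential, for without it $K_0$ could fail to be compact and no such $U$ would exist in general. Everything else is routine: the separation distance $\delta > 0$ follows from compactness against closedness, and uniform continuity of $g$ on $\overline{U}$ is standard.
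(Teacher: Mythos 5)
Your proof is correct and follows essentially the same route as the paper: build a relatively compact neighbourhood of $\overline{\varphi(C)}$, use uniform continuity of $g$ on its closure, and use uniform convergence on $C$ to push the $\varphi_n(C)$ into that neighbourhood. The only difference is cosmetic -- the paper cites Dieudonn\'e (3.18.2) for the existence of a relatively compact $r$-neighbourhood $V_r(\varphi(C))$, whereas you construct the enlargement $U$ by hand from a finite cover and a positive separation distance.
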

\begin{proof}
  Let $\eps>0$ and $C \subset E$ compact. Since $\varphi(C)$ is
  relatively compact, there exists an $r=r_C>0$ such that
  $V_r(\varphi(C))=\{x \in F \,:\, \exists \,c \in C \,:\,
  d(\varphi(c),x)<r\}$ is relatively compact
  \citep[(3.18.2)]{ddn60}. Since $g$ is continuous, its restriction to
  $\overline{V_r(\varphi(C))}$ (the closure of $V_r(\varphi(C))$) is
  uniformly continuous. Hence, there exists $\delta =
  \delta_{\eps,C,r} >0$, such that all points $x,y \in V_r(\varphi(C))
  \subset \overline{V_r(\varphi(C))}$ with $d(x,y)<\delta$ satisfy
  $\lvert g(x) - g(y)\rvert \leq \eps$. Without loss of generality, we
  may assume $\delta < r$.

  By the uniform convergence of the maps $\varphi_n$ to $\varphi$ when
  restricted to $C$ there exists $N=N_{C,\delta} \in \NN$ such that
  $\sup_{c \in C} d(\varphi_n(c),\varphi(c))< \delta < r$ for all $n
  \geq N$, which subsequently implies $\sup_{c \in C} \lvert g \circ
  \varphi_n(c) - g\circ \varphi(c)\rvert \leq \eps$ as desired.
\end{proof}

\subsection{Comment on Section~\ref{sec:extensions}}

In order to show the stated convergences from
Section~\ref{sec:extensions} one can proceed in a similar manner as
for Section~\ref{sec:tailchain}, but with considerable additional
notational effort. A key observation is the modified form of
Lemma~\ref{lemma:scmapping}~(i) (compared to Lemma~6.1~(i) in
\cite{kulisoul}), which allows to involve indicator functions
converging uniformly on compact sets to the constant function 1. For
instance, for Example~\ref{ex:ht-mix}, it is relevant that for a
continuous and bounded function $f$, the expression
$f(x)\mathbf{1}_{(\alpha_1-c)v+v^{\beta}x>0}$ converges uniformly on
compact sets to the continuous function $f(x)$ as $v\uparrow \infty$,
which implies that $\int f(x)\mathbf{1}_{(\alpha_1-c)v+v^{\beta}x>0}
\pi_1(v,\alpha_1 v + v^{\beta_1} dx)$ converges to $\int f(x)
G_1(dx)$. Likewise, the ``\textbf{1st~step}'' in the proof of
Lemma~\ref{lemma:for:induction:step} can be adapted by replacing $f_v$
by its multiplication with an indicator variable converging uniformly
on compact sets to 1.

{\small
\paragraph{Acknowledgements.}
  IP acknowledges funding from the SuSTaIn program - Engineering and
  Physical Sciences Research Council grant EP/D063485/1 - at the
  School of Mathematics of the University of Bristol and AB from the
  Rural and Environment Science and Analytical Services (RESAS)
  Division of the Scottish Government.  KS would like to thank Anja
  Jan{\ss}en for a fruitful discussion on the topic during the EVA
  2015 conference in Ann Arbor.
}

{

}


\end{document}